\newcommand\A{\mathrm{A}}   \newcommand\Aut{\mathrm{Aut}}
\newcommand\C{\mathrm{C}} \newcommand\calB{\mathcal{B}} \newcommand\Cay{\mathrm{Cay}} \newcommand\Cen{\mathbf{C}}  
\newcommand\D{\mathrm{D}}  
 \newcommand\Fix{\mathrm{Fix}}
\newcommand\Inn{\mathrm{Inn}}
\newcommand\Nor{\mathbf{N}}
  \newcommand\PGL{\mathrm{PGL}}   \newcommand\POm{\mathrm{P\Omega}}  \newcommand\PSL{\mathrm{PSL}}   \newcommand\PSO{\mathrm{PSO}} \newcommand\PSp{\mathrm{PSp}} \newcommand\PSU{\mathrm{PSU}}
       \newcommand\Sy{\mathrm{S}}
\newtheorem{theorem}{Theorem}[section]
\newtheorem{lemma}[theorem]{Lemma}
\newtheorem{corollary}[theorem]{Corollary}
\newtheorem{conjecture}[theorem]{Conjecture}
\theoremstyle{definition}
\newtheorem{question}[theorem]{Question}
\newtheorem{notation}[theorem]{Notation}
\newtheorem{construction}[theorem]{Construction}
\begin{document}

\title[Graphical regular representations]{Graphical regular representations of $(2,p)$-generated groups}

\author[Xia]{Binzhou Xia}
\address{School of Mathematics and Statistics\\The University of Melbourne\\Parkville, VIC 3010\\Australia}
\email{binzhoux@unimelb.edu.au}


\maketitle

\begin{abstract}
For groups $G$ that can be generated by an involution and an element of odd prime order, this paper gives a sufficient condition for a certain Cayley graph of $G$ to be a graphical regular representation (GRR), that is, for the Cayley graph to have full automorphism group isomorphic to $G$. This condition enables one to show the existence of GRRs of prescribed valency for a large class of groups, and in this paper, $k$-valent GRRs of finite nonabelian simple groups with $k\geq5$ are considered.

\textit{Key words:} Cayley graph; graphical regular representation; $(2,p)$-generated group; finite simple group

\textit{MSC2020:} 20B25, 05C25, 20D06
\end{abstract}

\section{Introduction}

The problem of whether a given group can be represented as the automorphism group of a graph was considered at a very early stage of graph theory. K\"{o}nig conjectured in his 1936 book `Theorie der endlichen und unendlichen Graphen'~\cite{Konig1990}, the first textbook on the field of graph theory, that every finite group is the automorphism group of a finite graph. K\"{o}nig's conjecture was proved in 1939 by Frucht~\cite{Frucht1939}, who, in 1949, also proved a stronger version stating that every finite group is the automorphism group of a cubic graph~\cite{Frucht1949}. Later, in 1957, Sabidussi proved that for all integers $k\geq3$, every finite group is the automorphism group of a $k$-regular graph~\cite{Sabidussi1957}.

In Frucht's theorem, or in the more general Sabidussi's theorem, the graph whose automorphism group is the given group may not be vertex-transitive and may not have the same order as the group. A graph which satisfies both of these conditions is called a \emph{graphical regular representation (GRR)} of the group. In other words, a graph $\Gamma$ is called a GRR of a group $G$ if $\Aut(\Gamma)$ acts regularly on the vertex set of $\Gamma$ and is isomorphic to $G$. After considerable work by many authors, Godsil~\cite{Godsil1978} determined which finite groups have a GRR. However, at this stage, a Sabidussi-like theorem concerning GRRs of a prescribed valency is far out of reach. Even in the special case that the valency is $3$, although it has attracted attention from several authors over the last few decades~\cite{CFP1981,FLWX2002,Godsil1983,LXZZ2022,Spiga2018,Xia2020I,Xia2020II,XF2016,XZZ2022}, little is yet known of which groups have a cubic GRR.

Given a group $G$ and an inverse-closed subset $S$ of $G\setminus\{1\}$, the \emph{Cayley graph} $\Cay(G,S)$ of $G$ with \emph{connection set} $S$ is the graph with vertex set $G$ such that two vertices $x,y$ are adjacent if and only if $yx^{-1}\in S$. If one identifies $G$ with its right regular permutation representation, then $G$ is a subgroup of $\Aut(\Cay(G,S))$. Conversely, a graph whose automorphism group has a subgroup $G$ regular on the vertex set is isomorphic to a Cayley graph of $G$. Thus a GRR of a group $G$ is exactly a Cayley graph of $G$ whose automorphism group is isomorphic to $G$. It is clear from the definition that $\Cay(G,S)$ is connected if and only if $S$ generates $G$. Moreover, if $\Cay(G,S)$ is a GRR of $G$ then it is connected (see Lemma~\ref{lem5}). Denote
\[
\Aut(G,S)=\{\alpha\in\Aut(G)\mid S^\alpha=S\},
\]
the setwise stabilizer of $S$ in $\Aut(G)$. Godsil~\cite{Godsil1981} showed that
\begin{equation}\label{eq2}
\Nor_{\Aut(\Cay(G,S))}(G)=G\rtimes\Aut(G,S).
\end{equation}
Accordingly, $\Aut(G,S)=1$ is a necessary condition for $\Cay(G,S)$ to be a GRR of $G$. In some circumstances this condition also turns out to be sufficient, which makes it much easier to search for GRRs.
Due to the applications in proving the existence of GRRs as well as its own interest, it is natural to ask the following question:

\begin{question}\label{que1}
In what circumstances is $\Aut(G,S)=1$ a sufficient condition for $\Cay(G,S)$ to be a GRR?
\end{question}

A group is said to be \emph{$(a,b)$-generated} if it can be generated by two elements of order $a$ and $b$ respectively.
A partial answer to Question~\ref{que1} is given by Godsil in~\cite{Godsil1983} for $(2,p)$-generated groups $G$ with certain connection sets $S$ of size $3$, where $p$ is an odd prime. As an application, Godsil proved in the same paper that there exists a cubic GRR of the alternating group $\A_n$ and the symmetric group $\Sy_n$, respectively, for every $n\geq19$. This result initiated the particular interest in Question~\ref{que1} on finite simple groups $G$ when $|S|=3$, see~\cite{FLWX2002,Spiga2018}. As a problem related to Question~\ref{que1}, it is posed in~\cite[Problem~A]{FLWX2002} to determine the groups $G$ such that $\Aut(G,S)=1$ is a sufficient condition for all $S$ to make $\Cay(G,S)$ a GRR. Some partial results on this problem have been obtained for $p$-groups, see~\cite{Godsil1981,LS2000}.

Our first theorem in this paper addresses Question~\ref{que1} for certain $G$ and $S$, where $G$ is $(2,p)$-generated for some prime $p$ and $|S|\geq5$. Note that $(2,p)$-generated groups form a large class of groups. For example, every finite nonabelian simple group is $(2,p)$-generated for some prime $p$~\cite{King2017}.

\begin{theorem}\label{thm1}
Let $k\geq5$ be an integer, and let $p\geq3\lceil k/2\rceil-2$ be a prime. Suppose that $G=\langle x,y\rangle$ is a finite group with $x^p=y^2=1$ and $yxy\notin\langle x\rangle$, and suppose that $G$ has no proper subgroup of index less than $4$. Let $R=\{x^{\pm1},x^{\pm2},\dots,x^{\pm\lfloor(k-1)/2\rfloor}\}$,
\[
S=
\begin{cases}
R\cup\{y\}&\textup{ if }k\textup{ is odd}\\
R\cup\{y,x^{-1}yx\}&\textup{ if }k\textup{ is even},
\end{cases}
\]
and $\Gamma=\Cay(G,S)$. Then $\Gamma$ is a GRR of $G$ if and only if $\Aut(G,S)=1$.
\end{theorem}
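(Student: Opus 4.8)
The plan is to prove the two directions separately. One direction follows at once from~\eqref{eq2}: if $\Gamma$ is a GRR of $G$, then, identifying $G$ with its right regular representation, $G$ is a regular subgroup of $\Aut(\Gamma)$ of order $|V(\Gamma)|$, so $\Aut(\Gamma)=G$ and hence $G=\Nor_{\Aut(\Gamma)}(G)=G\rtimes\Aut(G,S)$, forcing $\Aut(G,S)=1$. For the converse I assume $\Aut(G,S)=1$, put $A=\Aut(\Gamma)$, regard $G\le A$, and let $A_1$ be the stabilizer in $A$ of the identity vertex; since $A=GA_1$, it suffices to show $A_1=1$. After checking the routine facts that $S$ is inverse-closed with $1\notin S$ and $|S|=k$, that $\langle S\rangle=G$ (so $\Gamma$ is connected), and that $H:=\langle x\rangle\cong\ZZ_p$ has trivial core in $G$ (otherwise $H\trianglelefteq G$, whence $yxy\in H$, a contradiction), the whole of the argument is a local-structure analysis of $\Gamma$.

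The first step is to recognize the partition of $V(\Gamma)$ into right cosets of $H$. For adjacent vertices $u,v$ one counts their common neighbours. If $vu^{-1}\in R$, then $u,v$ lie in a single right coset $Hg$ whose induced subgraph is the circulant $C:=\Cay(\ZZ_p,\{\pm1,\dots,\pm m\})$ with $m=\lfloor(k-1)/2\rfloor\ge2$; the hypothesis $p\ge3\lceil k/2\rceil-2$ is exactly $p\ge3m+1$, so $C$ is connected and incomplete, an edge of $C$ of ``length'' $t$ (with $1\le t\le m$) has precisely $2m-t-1\ge m-1\ge1$ common neighbours, and a short computation using $yxy\notin H$ and $m<p-1$ shows these all lie inside $Hg$. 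If instead $vu^{-1}\in S\setminus R$, a case check using $yxy\notin H$ and the fact that $y$ and, when $k$ is even, $x^{-1}yx$ are distinct involutions lying outside $H\cup R$ shows that $u$ and $v$ have no common neighbour. Consequently $A$ preserves the set of $R$-edges, and, since the numbers $2m-t-1$ are pairwise distinct, also the set of length-$1$ $R$-edges; it follows that $A$ preserves the partition $\mathcal P$ of $V(\Gamma)$ into right cosets of $H$ and acts on each coset as a subgroup of $\D_{2p}$ --- the automorphism group of the $p$-cycle formed inside that coset by the length-$1$ $R$-edges. In particular $A_1$ acts on $H$ either trivially or as the reflection $x^i\mapsto x^{-i}$.

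Next I kill the kernel of this action and the part of $A_1$ acting trivially on $H$. Let $K$ be the kernel of the $A$-action on $\mathcal P$; since the core of $H$ in $G$ is trivial, $G$ acts faithfully on $\mathcal P$ and $G\cap K=1$. A non-identity element of $K$ would act on $H$ as a rotation $x^i\mapsto x^{i+1}$ or a reflection; tracing its effect along the $y$-edges --- it must send the coset of $yx^i$ to the coset of $yx^{i+1}$, respectively of $yx^{-i}$, while fixing every coset setwise --- forces $yxy\in H$, a contradiction, so $K=1$ and $A\hookrightarrow\Sym(\mathcal P)$. Now let $\beta\in A_1$ act trivially on $H$. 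Since $\beta$ preserves the $y$-edges it fixes the $y$-neighbour $yx^i$ of each $x^i$; and when $k$ is even a similar argument (the only alternative would make $\beta$ interchange the cosets of $yx^i$ and $yx^{i+1}$, again contradicting $yxy\notin H$) shows $\beta$ also fixes each $x^{-1}yx^{i+1}$. As $p$ is odd, $\beta$ then has two distinct fixed points in every coset adjacent to $H$, and an element of $\D_{2p}$ fixing two points of a $p$-cycle is the identity, so $\beta$ fixes each of these cosets pointwise; propagating this along the connected graph $\Gamma$ gives $\beta=1$.

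It remains to eliminate the reflection, that is, to rule out an involution $\tau\in A_1$ inducing $x^i\mapsto x^{-i}$ on $H$; this is the heart of the proof. Such a $\tau$, being determined on $H$ and compatible with the coset partition and the $y$-edges, is forced step by step to act on $V(\Gamma)=G$ in a prescribed way; the key point is that this forcing is rigid, and it is here that the full strength of $p\ge3m+1$ is used, in analysing common neighbours two steps away from a vertex. One then checks that the resulting bijection of $G$ is a group automorphism, hence lies in $\Aut(G,S)=1$, contradicting $\tau\ne1$; the hypothesis that $G$ has no proper subgroup of index less than $4$ enters at this stage, to exclude the exceptional low-index configurations that could otherwise support such a $\tau$. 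Therefore $A_1=1$, so $A=G$ and $\Gamma$ is a GRR of $G$. The main obstacle is precisely this last step, together with the odd-$k$ propagation in the previous paragraph: showing that an automorphism fixing the identity vertex is rigidly pinned down by its dihedral action on $H$, and thereby excluding the inversion. The arithmetic bound $p\ge3\lceil k/2\rceil-2$ and the index hypothesis on $G$ are exactly what make this rigidity succeed.
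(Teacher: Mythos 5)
Your forward direction is fine, and the first half of your converse is correct and genuinely different from the paper's route: counting common neighbours (an $R$-edge of length $t$ has exactly $2m-t-1$ of them with $m=\lfloor(k-1)/2\rfloor$, while an edge coming from $y$ or $x^{-1}yx$ has none, using $p\geq 3m+1$ and $yxy\notin\langle x\rangle$) does show that $A$ preserves edge-lengths, hence the partition into right cosets of $H=\langle x\rangle$, with each coset carrying a $p$-cycle on which the coset stabilizer acts through $\D_{2p}$; your kernel argument can also be made to work. This replaces the paper's Lemmas~\ref{lem1}, \ref{lem6} and the first part of Lemma~\ref{lem7} by an elementary local computation, which is a nice alternative up to that point.

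The rest, however, is a genuine gap rather than a proof. First, for odd $k$ there is only \emph{one} edge of $\Gamma$ between two adjacent cosets, so an element of $A_1$ acting trivially on $H$ is guaranteed only one fixed vertex in each neighbouring coset, and the reflection of that coset's $p$-cycle about this vertex is not excluded; your two-fixed-points propagation works only for even $k$, and you acknowledge the odd-$k$ case as ``the main obstacle'' without resolving it. Second, and more seriously, the elimination of $\tau\in A_1$ inducing $x\mapsto x^{-1}$ on $H$ is only asserted: ``forced step by step \dots rigid \dots one then checks that the resulting bijection of $G$ is a group automorphism.'' No such check is given, and it is precisely the hard point --- a graph automorphism fixing the vertex $1$ need not be a group automorphism unless $G$ is normal in $A$, which is what one is trying to establish; likewise the claim that the no-subgroup-of-index-less-than-$4$ hypothesis ``excludes exceptional low-index configurations'' has no argument behind it. Note that the paper never proves $A_1=1$ directly: it proves only that $|A_1|$ divides $4$ (Lemmas~\ref{lem7} and~\ref{lem9}), and this requires real inputs --- Xu's normality of prime-order circulants (Lemma~\ref{lem3}), the solvability transfer of Lemma~\ref{lem02}, and Weiss's theorem (Lemma~\ref{lem04}) applied to the valency-$p$ quotient --- after which non-normality of $\Gamma$ would give $G$ a subgroup of index less than $|A_1|\leq4$ (Lemma~\ref{lem10}), so $G\trianglelefteq A$ and Godsil's formula~\eqref{eq2} finishes the equivalence with $\Aut(G,S)=1$. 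Your outline contains no substitute for these ingredients, so the final rigidity step is missing, not routine.
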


The proof of Theorem~\ref{thm1} will be given in Section~\ref{sec1}, where the idea of the key lemma (Lemma~\ref{lem7}) is from~\cite{Godsil1983}. In fact, our Theorem~\ref{thm1} is inspired by~\cite{Godsil1983}. Note that the Cayley graph $\Gamma$ in Theorem~\ref{thm1} has valency $k$. For the convenience in applying Theorem~\ref{thm1}, we introduce the following notation.

\begin{notation}
For an integer $k\geq5$, a group $G$, and elements $x$ and $y$ of $G$ with $|x|>2\lfloor(k-1)/2\rfloor$ and $|y|=2$, let
\[
\Gamma_k(G,x,y)=
\begin{cases}
\Cay\left(G,\{x^{\pm1},x^{\pm2},\dots,x^{\pm(k-1)/2},y\}\right)&\textup{ if }k\textup{ is odd}\\
\Cay\left(G,\{x^{\pm1},x^{\pm2},\dots,x^{\pm(k-2)/2},y,x^{-1}yx\}\right)&\textup{ if }k\textup{ is even}.
\end{cases}
\]
\end{notation}

As mentioned above, the finite nonabelian simple groups form a large class of $(2,p)$-generated groups, and the special interest in the existence of GRRs of nonabelian simple groups has been in the cubic case. For example, it was conjectured in~\cite{XF2016} and recently proved in~\cite{XZZ2022} that, except for a finite number of cases, every finite nonabelian simple group has a cubic GRR. Then a natural conjecture to extend this is:

\begin{conjecture}\label{conj1}
For each integer $k\geq3$, except for a finite number of cases, every finite nonabelian simple group has a $k$-valent GRR.
\end{conjecture}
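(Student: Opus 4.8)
The plan is to derive the conjecture for $k\ge5$ directly from Theorem~\ref{thm1}, and to fold in the two small valencies separately: $k=3$ is precisely the cubic GRR theorem for finite simple groups established in~\cite{XZZ2022}, while $k=4$ should yield to an analogous even-valency construction. So fix $k\ge5$ and put $N=3\lceil k/2\rceil-2$. Two of the hypotheses of Theorem~\ref{thm1} are automatic for a finite nonabelian simple group $G$: it has no proper subgroup of index less than $4$ (such a subgroup would give a nontrivial homomorphism from $G$ into a symmetric group of degree at most $3$, impossible for a nonabelian simple group), and whenever $G=\langle x,y\rangle$ we have $yxy\notin\langle x\rangle$ (otherwise $y$ would normalize the order-$p$ subgroup $\langle x\rangle$, forcing $\langle x\rangle\trianglelefteq G$). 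Hence, by Theorem~\ref{thm1}, it suffices to show that for all but finitely many finite nonabelian simple groups $G$ there is a prime $p\ge N$ and a pair $(x,y)$ with $x^p=y^2=1$ and $\langle x,y\rangle=G$ for which $\Aut(G,S)=1$, where $S$ is the connection set of $\Gamma_k(G,x,y)$.

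I would organize the argument along the classification of finite simple groups, treating the alternating groups, the classical groups, and the exceptional groups of Lie type in turn; the $26$ sporadic groups are finite in number and so do not affect an ``all but finitely many'' conclusion. For each family, and for all sufficiently large $G$ in it, I first need a prime $p\ge N$ such that $G$ contains an element $x$ of order $p$ and an involution $y$ with $\langle x,y\rangle=G$. Elements of prime order exceeding the fixed bound $N$ are plentiful once $|G|$ is large, and the required $(2,p)$-generation follows from known generation results for simple groups --- for instance the $(2,p)$-generation theorem of King~\cite{King2017} together with the probabilistic $(2,s)$-generation estimates of Liebeck and Shalev.

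The crux is the rigidity condition $\Aut(G,S)=1$, and here the shape of $S$ does most of the work. Any $\alpha\in\Aut(G,S)$ preserves orders, hence permutes the order-$p$ set $R=\{x^{\pm1},\dots,x^{\pm\lfloor(k-1)/2\rfloor}\}$ among itself and permutes the one or two involutions of $S$. In particular $\alpha(x)\in R\subseteq\langle x\rangle$, so $\alpha$ normalizes $\langle x\rangle$ and acts on it by some multiplier; since $p\ge N$ makes $\langle x\rangle$ large relative to the narrow interval $R$, the only multipliers preserving $R$ are $\pm1$, so $\alpha(x)=x^{\pm1}$. For $k$ odd the single involution $y$ is fixed, so the only potential nontrivial $\alpha$ is the involutory automorphism $\iota$ determined by $\iota(x)=x^{-1}$ and $\iota(y)=y$; for $k$ even one must in addition rule out the automorphisms that interchange $y$ and $x^{-1}yx$. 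Thus $\Aut(G,S)=1$ amounts to choosing the generating pair so that no automorphism of $G$ inverts $x$ while fixing (or admissibly permuting) the involutions.

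The final and most delicate step is to guarantee such a rigid pair for all large $G$, which I would do by a counting argument in the style of Liebeck and Shalev. A pair $(x,y)$ is ``bad'' only if some nontrivial $\alpha\in\Aut(G)$ --- necessarily an involution with $\alpha(x)=x^{-1}$ --- stabilizes $S$; for each fixed such $\alpha$ the number of admissible $x$ (those inverted by $\alpha$) and of admissible $y$ is governed by the fixed-point data of $\alpha$ and is of strictly smaller order than the total number of $(2,p)$-generating pairs. Summing over the automorphisms of $G$ and using that $|\Out(G)|$ is small, the bad pairs form a vanishing proportion, so a rigid pair exists once $|G|$ is large enough. Making this estimate uniform across all the Lie-type families --- in particular controlling the elements inverted within $\Aut(G)$ and the contribution of the outer, field, and graph symmetries --- is the step I expect to be genuinely hard, and it is the reason the statement is posed as a conjecture rather than proved outright.
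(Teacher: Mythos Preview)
The statement you are addressing is a \emph{conjecture} in the paper, not a theorem; the paper does not prove it but establishes partial results toward it (Corollaries~\ref{cor1} and~\ref{cor2}). So there is no ``paper's own proof'' to compare against, and your proposal is best read as a research outline. On that reading, your plan for $k\ge5$ largely mirrors the paper's strategy: use Theorem~\ref{thm1} to reduce to finding rigid $(2,p)$-generating pairs, then count to show such pairs exist. The paper carries this out constructively for alternating groups (Section~\ref{sec3}) and probabilistically for classical groups of large dimension (Section~\ref{sec2}, via the counting lemmas from~\cite{Xia2020I}), and you correctly identify the endgame --- ruling out involutory automorphisms of $G$ that invert $x$ while fixing or swapping the involutions in $S$ --- as the crux; this is exactly what Lemma~\ref{lem14} isolates. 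You also honestly flag in your last sentence that the uniform estimate across Lie-type families is the hard step.

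There are, however, two places where your outline is more optimistic than warranted. First, the case $k=4$: you write that it ``should yield to an analogous even-valency construction,'' but the paper explicitly notes in its closing paragraph that Theorem~\ref{thm1}, which requires $k\ge5$, cannot handle $k=4$, and that a separate answer to Question~\ref{que1} for $|S|=4$ would be needed; no such substitute is currently available, so this case is genuinely open and not a minor wrinkle. Second, even for $k\ge5$ your sketch (like the paper's results) only covers classical groups of rank at least roughly $3k/2$; for each fixed $k$ there remain infinitely many classical groups of bounded rank (e.g.\ $\PSL_2(q)$, $\PSL_3(q)$, \dots\ as $q$ varies) and infinitely many exceptional groups of each type as $q$ varies. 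For these families one needs both a prime $p\ge3\lceil k/2\rceil-2$ dividing $|G|$ with the right $(2,p)$-generation behavior and a bounded-rank analogue of the fixed-point estimates in Lemmas~\ref{lem12} and~\ref{lem11}; neither is supplied by your counting sketch, and this --- together with $k=4$ --- is precisely why Conjecture~\ref{conj1} remains a conjecture.
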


Towards an affirmative solution to Conjecture~\ref{conj1}, we apply Theorem~\ref{thm1} in Sections~\ref{sec3} and~\ref{sec2} to study the existence of $k$-valent GRRs of alternating groups and groups of Lie type, respectively. Observe that, when applying Theorem~\ref{thm1} to nonabelian simple groups $G$, the conditions $yxy\notin\langle x\rangle$ and that $G$ has no proper subgroup of index less than $4$ are automatically satisfied. This reduces our task to proving the existence of elements $x$ and $y$ of order $p$ and $2$, respectively, such that $G=\langle x,y\rangle$ and $\Aut(G,S)=1$. For alternating groups, we have:

\begin{theorem}\label{thm3}
Let $k\geq5$ be an integer, let $G=\A_n$ with $n\geq\max\{14,6\lceil k/2\rceil-12\}$, and let $p$ be a prime such that $(n+4)/2<p\leq n-3$. Then for each element $x$ of order $p$ in $G$, there exists an involution $y$ in $G$ such that $\Gamma_k(G,x,y)$ is a GRR of $G$.
\end{theorem}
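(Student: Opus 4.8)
The plan is to apply Theorem~\ref{thm1} with $G=\A_n$. Of its hypotheses, all but the generation conditions on $x,y$ are automatic here: $\A_n$ has no proper subgroup of index less than $4$ since $n\geq14$, and the bound $p\geq3\lceil k/2\rceil-2$ follows because $n\geq6\lceil k/2\rceil-12$ gives $p>(n+4)/2\geq3\lceil k/2\rceil-4$, hence $p\geq3\lceil k/2\rceil-3$, and $3\lceil k/2\rceil-3=3(\lceil k/2\rceil-1)$ is a multiple of $3$ exceeding $3$ (as $k\geq5$), so not prime, forcing $p\geq3\lceil k/2\rceil-2$. Since all elements of order $p$ in $\A_n$ are $\Sym_n$-conjugate, and the property of $(x,y)$ we want is preserved under simultaneous $\Sym_n$-conjugation (as $\A_n\trianglelefteq\Sym_n$), it suffices to treat $x=(1\,2\,\cdots\,p)$ and to exhibit an involution $y\in\A_n$ with $\A_n=\langle x,y\rangle$, $yxy\notin\langle x\rangle$, and $\Aut(\A_n,S)=1$, where $S$ is the connection set of $\Gamma_k(\A_n,x,y)$ — this is the set $S$ of Theorem~\ref{thm1}, as $|x|=p>2\lfloor(k-1)/2\rfloor$.

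Because $p$ is a prime with $n/2<p\leq n-3$, the element $x$ is a single $p$-cycle with fixed-point set $\Omega_0=\{p+1,\dots,n\}$ of size $\ell:=n-p\geq3$, and $p>(n+4)/2$ forces $p>\ell+2$. I would take $y$ to be the product of the $\ell$ transpositions $(a_i\ p+i)$ for $1\leq i\leq\ell$, with $\{a_1,\dots,a_\ell\}=\{1,2,\dots,\ell-1,\ell+1\}$, together with one more transposition supported inside $\{1,\dots,p\}\setminus\{a_1,\dots,a_\ell\}$ when $\ell$ is odd, so that $y$ has an even number of transpositions and hence lies in $\A_n$. The point of this particular set is that, as a subset of $\Z/p\Z$, it has no reflective symmetry $t\mapsto c-t$: such a reflection would have to reverse the interval $\{1,\dots,\ell-1\}$ (its only sub-interval of length $\ell-1\geq2$), hence send $\ell+1$ to $-1\equiv p-1$, putting $p-1$ into the set and forcing $p=\ell+2$, contrary to $p>\ell+2$. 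Now $\langle x,y\rangle$ is transitive by construction, and since it contains the $p$-cycle $x$ with $p$ prime and $n/2<p<n$, any nontrivial block system would have fewer than $p$ blocks, hence be fixed blockwise by $x$, forcing the block through the point $1$ to contain the whole support of $x$ (which has size $>n/2$), impossible for a proper block; so $\langle x,y\rangle$ is primitive, and Jordan's theorem on primitive groups containing a cycle of prime length with at least three fixed points gives $\langle x,y\rangle=\A_n$. Also $yxy\notin\langle x\rangle$, since $y$ moves $1$ into $\Omega_0$ and so does not normalize $\langle x\rangle$.

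It remains to prove $\Aut(\A_n,S)=1$. As $n\neq6$, every automorphism of $\A_n$ is conjugation by a unique element of $\Sym_n$. Let $\alpha\in\Aut(\A_n,S)$; the order-$p$ elements of $S$ are exactly $x^{\pm1},\dots,x^{\pm m}$ with $m:=\lfloor(k-1)/2\rfloor\geq2$, so $\alpha$ permutes them, and writing $x^\alpha=x^j$, multiplication by $j$ preserves $\{\pm1,\dots,\pm m\}$ modulo $p$. Inspecting the first multiple of $j$ falling outside $\{1,\dots,m\}$ shows, using $p\geq3m+1$, that $j\equiv\pm1\pmod p$. If $j=1$, then $\alpha$ centralizes $x$ and (checking the involution part of $S$; for even $k$, conjugation by $x^{-1}$ swapping $y$ with $x^{-1}yx$ would not preserve $S$) also fixes $y$, so the conjugating permutation centralizes $\langle x,y\rangle=\A_n$ and is trivial, giving $\alpha=1$. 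If $j=-1$, then — for even $k$, after replacing the conjugating permutation by its product with $x$ — there is $\sigma\in\Sym_n$ with $\sigma x\sigma^{-1}=x^{-1}$ and $\sigma y\sigma^{-1}=y$; such $\sigma$ normalizes $\langle x\rangle$, hence preserves $\{1,\dots,p\}$ and restricts there to a reflection $t\mapsto c-t$, and $\sigma y\sigma^{-1}=y$ then forces this reflection to preserve $\{a_1,\dots,a_\ell\}$, which is impossible. Hence $\Aut(\A_n,S)=1$, and Theorem~\ref{thm1} shows $\Gamma_k(\A_n,x,y)$ is a GRR of $\A_n$.

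The crux — and where essentially all the work lies — is the construction of $y$ and the proof that $\Aut(\A_n,S)=1$: one needs the ``feet'' $a_1,\dots,a_\ell$ of the mixing transpositions of $y$ to form a subset of $\Z/p\Z$ with no reflective symmetry (this is exactly what kills a potential automorphism inverting $x$), while keeping $\langle x,y\rangle$ equal to $\A_n$ and $y$ an even permutation; the parity correction when $\ell$ is odd and the two shapes of connection set for odd and even $k$ are the bookkeeping. The reduction forcing $x^\alpha=x^{\pm1}$ is precisely what the hypothesis $p\geq3\lceil k/2\rceil-2$ is there for.
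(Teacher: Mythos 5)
Your proposal is correct and follows essentially the same route as the paper: reduce to Theorem~\ref{thm1}, take $x=(1,\dots,p)$, build an involution $y$ from transpositions pairing the fixed points of $x$ with points of its support (plus a parity-fixing transposition), prove generation via transitivity, primitivity and Jordan's theorem, and kill $\Aut(G,S)$ by showing no reflection of the $p$-cycle is compatible with $y$. The only differences are cosmetic: your $y$ differs slightly from Construction~\ref{cons1}, and you exclude an $x$-inverting automorphism via the reflective asymmetry of the ``foot'' set $\{a_1,\dots,a_\ell\}$ (after composing with $\Inn(x)$, in the spirit of Lemma~\ref{lem14}) rather than via the fixed-point sets of $y$ and $x^{-1}yx$ used in Lemma~\ref{lem8}.
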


Section~\ref{sec3} is devoted to the proof of Theorem~\ref{thm3}. The proof is constructive, and it is interesting to remark that an ingredient in the proof is a celebrated theorem of Jordan in 1873 (see the proof of Lemma~\ref{lem4}). We also remark that, for an integer $n\geq14$, there does exist a prime $p$ with $(n+4)/2<p\leq n-3$ (see Lemma~\ref{lem2}). Thus the following corollary is an immediate consequence of Theorem~\ref{thm3}, which confirms Conjecture~\ref{conj1} for alternating groups when $k\geq5$.

\begin{corollary}\label{cor1}
For each integer $k\geq5$, there is a $k$-valent GRR of $\A_n$ for all $n\geq\max\{14,6\lceil k/2\rceil-12\}$.
\end{corollary}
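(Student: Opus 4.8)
The plan is to derive Corollary~\ref{cor1} as an essentially immediate consequence of Theorem~\ref{thm3}, the only preparatory work being to locate a suitable prime and to exhibit an element of the required order. So fix an integer $k\geq5$ and an integer $n\geq\max\{14,6\lceil k/2\rceil-12\}$. First I would invoke Lemma~\ref{lem2} (the number-theoretic input, a Bertrand-type statement already announced in the text) to obtain a prime $p$ with $(n+4)/2<p\leq n-3$; this is exactly the hypothesis on $p$ needed to apply Theorem~\ref{thm3} to $G=\A_n$.

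Next I would produce an element $x$ of order $p$ in $\A_n$. Since $p\leq n-3<n$, the group $\Sy_n$ contains a $p$-cycle, and as $p$ is odd a $p$-cycle is an even permutation, hence lies in $\A_n$ and has order $p$. Applying Theorem~\ref{thm3} to this $x$ yields an involution $y\in\A_n$ such that $\Gamma_k(\A_n,x,y)$ is a GRR of $\A_n$. (Note that the defining hypothesis $|x|>2\lfloor(k-1)/2\rfloor$ in the notation for $\Gamma_k$ is met, since $p>(n+4)/2>k-1$.)

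It then remains to record that $\Gamma_k(\A_n,x,y)$ has valency $k$: in the odd case the connection set consists of the $k-1$ distinct powers $x^{\pm1},\dots,x^{\pm(k-1)/2}$ (distinct because $|x|=p>k-1$) together with the involution $y$, which is none of these powers since they have odd order; in the even case it consists of $k-2$ such powers together with the two involutions $y$ and $x^{-1}yx$, which are distinct because $yxy\notin\langle x\rangle$ forces $x^{-1}yx\neq y$, and neither is a power of $x$. Hence the valency is $k$ in both cases, as already noted after Theorem~\ref{thm1}, and $\Gamma_k(\A_n,x,y)$ is a $k$-valent GRR of $\A_n$. I do not expect a genuine obstacle here: the substantive content lives in Theorem~\ref{thm3} and Lemma~\ref{lem2}, and the only points requiring (routine) care are the existence of an order-$p$ element in $\A_n$ and the verification that the connection set has exactly $k$ elements.
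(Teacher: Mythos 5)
Your proof is correct and takes essentially the same route as the paper, which deduces Corollary~\ref{cor1} directly from Theorem~\ref{thm3} together with Lemma~\ref{lem2}; the existence of an order-$p$ element (a $p$-cycle, even since $p$ is odd) and the valency count are exactly the routine details left implicit there. The only small point to tighten is your appeal to $yxy\notin\langle x\rangle$ when showing $x^{-1}yx\neq y$ in the even case: this is not part of the statement of Theorem~\ref{thm3}, but it follows at once because a GRR is connected, so $\langle x,y\rangle\supseteq\langle S\rangle=\A_n$, whereas $x^{-1}yx=y$ would force $\langle x,y\rangle$ to have order $2p$.
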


For a power $a$ of a prime $r$, we call a prime $p$ a \emph{primitive prime divisor} of $a-1$ if $p$ divides $a-1$ but not $r^j-1$ for $j=1,\dots,\log_r(a)-1$. In other words, a primitive prime divisor of $a-1$ is a prime number $p$ such that $r$ has order $\log_r(a)$ in $\mathbb{F}_p^\times$. In particular, if $p$ is a primitive prime divisor of $a-1$ then $p\geq\log_r(a)+1$. As a consequence of Zsigmondy's theorem (see, for example,~\cite[Theorem IX.8.3]{Blackburn1982}), a primitive prime divisor of $r^m-1$ always exists for $m\geq7$. Based on Theorem~\ref{thm1}, we prove in Section~\ref{sec2} the following theorem, where a random involution is meant to be an involution chosen uniformly at random from all involutions of the group under consideration.

\begin{theorem}\label{thm2}
Let $k\geq5$ be an integer, let $G$ be a finite classical simple group, and let $p$ be a primitive prime divisor of $q^e-1$, where $G$ and $e$ are given in Table~$\ref{tab4}$ with prime power $q$. Suppose that $x$ is an element of order $p$ in $G$. Then for a random involution $y$ of $G$, the probability that $\Gamma_k(G,x,y)$ is a GRR of $G$ approaches $1$ as $q^n$ approaches infinity.
\end{theorem}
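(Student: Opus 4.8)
The plan is to derive the statement from Theorem~\ref{thm1} together with two probabilistic estimates: that $x$ and a random involution $y$ generate $G$, and that the resulting connection set $S$ satisfies $\Aut(G,S)=1$. First I would set up the reduction. For $q^n$ large, $G$ is a large nonabelian simple group, so it has no proper subgroup of index less than $4$; moreover $p\ge3\lceil k/2\rceil-2$ (by the value of $e$ in Table~\ref{tab4} and $p\ge e+1$) and $|x|=p>2\lfloor(k-1)/2\rfloor$, so $\Gamma_k(G,x,y)$ is defined. If $\langle x,y\rangle=G$, then $yxy\notin\langle x\rangle$ (otherwise $\langle x\rangle$ would be a nontrivial proper normal subgroup of $G$), so all hypotheses of Theorem~\ref{thm1} hold and $\Gamma_k(G,x,y)$ is a GRR of $G$ if and only if $\Aut(G,S)=1$; if instead $\langle x,y\rangle\ne G$, then $\langle S\rangle=\langle x,y\rangle\ne G$ and $\Gamma_k(G,x,y)$ is disconnected, hence not a GRR. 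Writing $\mathcal I$ for the set of involutions of $G$ and taking $y$ uniformly at random from $\mathcal I$,
\[
\mathbb P\bigl[\Gamma_k(G,x,y)\text{ is not a GRR}\bigr]\le\mathbb P\bigl[\langle x,y\rangle\ne G\bigr]+\mathbb P\bigl[\Aut(G,S)\ne1\bigr],
\]
and it suffices to show both terms tend to $0$ as $q^n\to\infty$.

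For the first term, $\langle x,y\rangle\ne G$ forces $x$ (and $y$) into a maximal subgroup $M$ of $G$, so $\mathbb P[\langle x,y\rangle\ne G]\le\sum_M|\mathcal I\cap M|/|\mathcal I|$ over the maximal $M$ containing $x$. Since $p$ is a primitive prime divisor of $q^e-1$ with $e$ as large as prescribed in Table~\ref{tab4}, the element $x$ acts irreducibly on a subspace of dimension exceeding $n/2$, so no such $M$ is parabolic; the classification of subgroups of classical groups containing a primitive prime divisor element then restricts $M$, up to conjugacy, to a short list (field-extension subgroups, boundedly many classical subgroups, and boundedly many almost simple subgroups). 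For each I would bound the number of $G$-conjugates of $M$ containing $x$ in terms of $|\Cen_G(x)|$ and the number of $G$-conjugates of $x$ inside $M$, bound $|\mathcal I\cap M|$, and compare with the lower bound $|\mathcal I|\ge|G|/|\Cen_G(t)|$ for an involution $t$ whose centralizer has largest order; summing then gives $\mathbb P[\langle x,y\rangle\ne G]\to0$. Alternatively one could cite known results on random generation of classical groups by a prescribed semisimple element and a random involution.

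For the second term, suppose $\alpha\in\Aut(G,S)$ is nontrivial. Since $p$ is an odd prime while $y$ and $x^{-1}yx$ are involutions, $\alpha$ preserves both the set $R=\{x^{\pm1},\dots,x^{\pm\lfloor(k-1)/2\rfloor}\}$ of powers of $x$ in $S$ and the set of involutions in $S$. Preserving $R$ forces $\alpha(\langle x\rangle)=\langle x\rangle$ and $\alpha(x)=x^j$ with $1\le|j|\le\lfloor(k-1)/2\rfloor$, and a short arithmetic argument about multipliers of the interval $\{1,\dots,\lfloor(k-1)/2\rfloor\}$ modulo $p$ (this is where $p\ge3\lceil k/2\rceil-2$ is used) forces $j=\pm1$. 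Tracking the images of $y$ and $x^{-1}yx$, and using that $\langle x,y\rangle=G$ is nonabelian simple to exclude $\alpha$ being conjugation by $x$, one sees that $\Aut(G,S)\ne1$ implies the existence of $\beta\in\Aut(G)$ with $\beta(x)=x^{-1}$ and $\beta(y)=y$; such $\beta$ is nontrivial, and since $\beta^2$ fixes the generators $x,y$ we get $\beta^2=1$. Now the involutory automorphisms inverting $x$ form a single coset of $\Cen_{\Aut(G)}(x)$, which is small relative to $|G|$ because $x$ is a primitive prime divisor element; and for each such $\beta$, the subgroup $\Fix(\beta)$ is small relative to $|G|$ too, since $\beta$ normalizes $\langle x\rangle$ and inverts its large irreducible block and so cannot lie in a large subgroup of $G$, whence the number of involutions in $\Fix(\beta)$ is negligible against $|\mathcal I|$. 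Summing over these finitely many $\beta$ gives $\mathbb P[\Aut(G,S)\ne1]\le\sum_\beta|\mathcal I\cap\Fix(\beta)|/|\mathcal I|\to0$.

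The step I expect to be the main obstacle is the uniform quantitative control needed in the last two estimates, simultaneously across all the families in Table~\ref{tab4} and in both regimes $q\to\infty$ and $n\to\infty$: one must produce upper bounds on the number of involutions in the maximal subgroups containing $x$, and in the fixed-point subgroups of automorphisms inverting $x$, that are genuinely smaller than the available lower bound on $|\mathcal I|$, while keeping the number of relevant subgroups under control. The structural inputs that make this feasible are that a primitive prime divisor element of sufficiently large $e$ lies in no parabolic subgroup and has normalizer essentially a maximal torus extended by a small cyclic group, so that every subgroup containing $x$---and every $\Fix(\beta)$ with $\beta$ inverting $x$---is forced far from the largest subgroups of $G$; the smallest-rank members of each family, where $|\mathcal I|$ is not comfortably larger than the subgroups in question, either require a sharper count or are set aside as finitely many exceptions not affecting the limit.
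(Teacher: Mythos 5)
Your outline follows essentially the same route as the paper: reduce to Theorem~\ref{thm1} (checking $p\geq e+1\geq 3\lceil k/2\rceil-2$ and the index condition, with $yxy\notin\langle x\rangle$ coming from simplicity once $\langle x,y\rangle=G$), split the failure probability into the generation event and the event $\Aut(G,S)\neq1$, reduce the latter via the Lemma~\ref{lem14}-type argument to the existence of an involutory $\beta\in\Aut(G)$ with $x^\beta=x^{-1}$, $y^\beta=y$, and finish with a union bound over such $\beta$, counting involutions of $G$ centralized by each. The paper does exactly this, but obtains every quantitative input by citation rather than proof: random generation by $x$ and a random involution is \cite[Proposition~3.1]{Xia2020I} (Lemma~\ref{lem13}), the bounds on $i_2(\Cen_G(\beta))$ for inner-diagonal and for field/graph-type $\beta$ are \cite[Lemmas~4.2 and 4.1]{Xia2020I} (Lemmas~\ref{lem12}, \ref{lem11}), the lower bound on $i_2(G)$ and the order of $\Nor_G(\langle x\rangle)$ come from \cite{King2017}; your ``alternatively cite known results'' branch is thus the paper's actual proof. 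One caution on the step you flag as the main obstacle: your heuristic that $\Fix(\beta)=\Cen_G(\beta)$ is ``small relative to $|G|$, whence its involution count is negligible'' does not work as stated, since $\Cen_G(\beta)$ can be a large maximal subgroup whose order exceeds $i_2(G)$ (e.g.\ $\beta$ of graph type in $\PSL_n(q)$ has centralizer of symplectic or orthogonal type, of order about $q^{n^2/2+n/2}$ versus $i_2(G)\approx q^{\lfloor n^2/2\rfloor}$), so the crude bound $i_2(\Cen_G(\beta))\leq|\Cen_G(\beta)|$ is useless; one genuinely needs the finer involution counts inside these centralizers, which is precisely what the cited lemmas (encoded in $m(G)$, $\ell(G)$, $i(G)$ of Table~\ref{tab15}) supply. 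Likewise, your count of the relevant $\beta$ via a coset of $\Cen_{\Aut(G)}(x)$ is a minor variant of the paper's bound on involutions in $\Nor_{\Aut(G)}(\langle x\rangle)$ (the quantity $v(G)$), and both serve the same purpose in the union bound.
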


\begin{table}[htbp]
\caption{The classical group $G$ and number $e$ in Theorem~\ref{thm2}}\label{tab4}
\centering
\begin{tabular}{|l|l|l|l|}
\hline
row & $G$ & conditions & $e$\\
\hline
$1$ & $\PSL_n(q)$ & $n\geq\max\{9,(3k-3)/2\}$ & $n$\\
$2$ & $\PSU_n(q)$ & $n\geq\max\{5,(3k-3)/4\}$ odd & $2n$\\
$3$ & $\PSU_n(q)$ & $n\geq\max\{6,(3k+1)/4\}$ even & $2(n-1)$\\
$4$ & $\PSp_n(q)$ & $n\geq\max\{10,(3k-3)/2\}$ even & $n$\\
$5$ & $\POm_n(q)$ & $n\geq\max\{9,(3k-1)/2\}$ odd, $q$ odd & $n-1$\\
$6$ & $\POm_n^+(q)$ & $n\geq\max\{14,(3k+1)/2\}$ even & $n-2$\\
$7$ & $\POm_n^-(q)$ & $n\geq\max\{14,(3k-3)/2\}$ even & $n$\\
\hline
\end{tabular}
\end{table}

As a consequence of Theorem~\ref{thm2}, for each integer $k\geq5$, there are at most finitely many groups in Table~$\ref{tab4}$ that have no $k$-valent GRRs. Since there is no exceptional group of Lie type of rank larger than $8$, we then derive the following:

\begin{corollary}\label{cor2}
For each integer $k\geq5$, there are at most finitely many finite simple groups of Lie type of rank at least $\max\{9,(3k-6)/2\}$ that have no $k$-valent GRRs. In particular, for each integer $k\geq5$, there exists a constant $N(k)$ such that every finite simple group of Lie type of rank at least $N(k)$ has a $k$-valent GRR.
\end{corollary}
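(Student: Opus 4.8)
The plan is to deduce the statement from Theorem~\ref{thm2} together with the standard structure theory of finite simple groups of Lie type. First I would recall that, up to finitely many exceptions, every finite simple group of Lie type is either a classical group $\PSL_n(q)$, $\PSU_n(q)$, $\PSp_n(q)$, $\POm_n(q)$, $\POm_n^\pm(q)$, or an exceptional group of Lie type; and the exceptional groups ${}^2\!B_2(q)$, ${}^2\!G_2(q)$, ${}^2\!F_4(q)$, $G_2(q)$, $F_4(q)$, $E_6(q)$, ${}^2\!E_6(q)$, $E_7(q)$, $E_8(q)$ each have untwisted Lie rank at most $8$. Hence a simple group of Lie type of rank at least $9$ must be one of the classical groups in the rows of Table~\ref{tab4}, provided its rank is large enough that the dimension bounds in the table are met. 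Concretely, for a classical group the Lie rank is roughly $n/2$ (it equals $n-1$ for $\PSL_n(q)$, and $\lfloor n/2\rfloor$ for the symplectic and orthogonal families, $\lfloor n/2\rfloor$ for unitary), so requiring the rank to be at least $\max\{9,(3k-6)/2\}$ forces $n$ to be at least the threshold appearing in the corresponding row of Table~\ref{tab4}; I would verify this row by row, choosing the constant $(3k-6)/2$ precisely so that the worst case among rows $1$--$7$ is covered (the binding constraints come from the rows where the table threshold on $n$ is largest relative to the rank, namely the $\POm^+$ and $\PSU$ even rows).

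Next, for each such classical group $G$ in Table~\ref{tab4}, Zsigmondy's theorem guarantees that the integer $e$ in the table (which is at least $7$ once the rank is large, as one checks case by case) admits a primitive prime divisor $p$ of $q^e-1$. Since $G$ is a nonabelian simple group, it is $(2,p)$-generated for a suitable prime, and more to the point, the conditions $yxy\notin\langle x\rangle$ and ``no proper subgroup of index less than $4$'' required by Theorem~\ref{thm1} hold automatically, as noted in the text preceding Theorem~\ref{thm3}. One must check that $G$ actually contains an element $x$ of order $p$: this is where $p$ being a primitive prime divisor of $q^e-1$ is used, since such elements arise from cyclic tori (Singer-type cycles) of the relevant classical group, and their existence is standard. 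Then Theorem~\ref{thm2} applies and yields that, for a random involution $y$, the probability that $\Gamma_k(G,x,y)$ is a GRR of $G$ tends to $1$ as $q^n\to\infty$; in particular, for all but finitely many pairs $(q,n)$ in each row there exists at least one involution $y$ making $\Gamma_k(G,x,y)$ a GRR. Since each row of Table~\ref{tab4} with a fixed lower bound on $n$ still ranges over infinitely many groups, but only finitely many of them can fail to have a $k$-valent GRR, and there are only seven rows, the total number of simple groups of Lie type of rank at least $\max\{9,(3k-6)/2\}$ without a $k$-valent GRR is finite. For the ``In particular'' clause, one simply sets $N(k)=\max\{9,(3k-6)/2\}+M(k)$, where $M(k)$ is a bound beyond which the finitely many exceptions do not occur; this is possible because ``rank at least $r$'' is a condition that, for $r$ large, leaves out all the exceptional groups and all small classical groups, so $N(k)$ can be taken to absorb the finite exceptional set.

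The main obstacle I anticipate is bookkeeping rather than anything deep: one must confirm that the numerical thresholds in Table~\ref{tab4} are genuinely implied by a single rank bound of the shape $\max\{9,(3k-6)/2\}$ across all seven classical families simultaneously, and that $e\geq7$ (so Zsigmondy applies without exceptional cases, e.g. $e=6$ with $q=2$) holds in the regime we care about. A secondary point needing care is that Theorem~\ref{thm2} gives an asymptotic-in-$q^n$ statement for each \emph{fixed} family/row, so to conclude a finiteness statement uniform over the whole union of rows one uses that there are only finitely many rows and that within each row the exceptional set $\{(q,n)\}$ is finite; this is routine but should be stated explicitly. No genuinely new argument is required: the corollary is a packaging of Theorem~\ref{thm2} via the classification of finite simple groups and the rank bound on exceptional groups.
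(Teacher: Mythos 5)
Your proposal follows the same route as the paper, whose entire proof is the sentence preceding the corollary: combine Theorem~\ref{thm2} (finitely many exceptions within each row of Table~\ref{tab4}) with the fact that exceptional groups of Lie type have rank at most $8$, so the only real content is the row-by-row check that the rank bound forces the dimension conditions of Table~\ref{tab4}.

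One concrete correction to your bookkeeping remark, since it would misdirect that check: the binding family is not $\POm_n^+(q)$ or the even-dimensional unitary groups but $\PSL_n(q)$. For the symplectic, orthogonal and unitary rows the Lie rank is about $n/2$ while the thresholds on $n$ are about $3k/2$ (rows 4--7) or $3k/4$ (rows 2--3), so rank roughly $3k/4$ already suffices there; for $\PSL_n(q)$ the rank is $n-1$ and row 1 demands $n\geq(3k-3)/2$, i.e.\ rank at least $(3k-5)/2$, which is exactly where the constant $(3k-6)/2$ is tight. Carrying out the verification, rank $\geq(3k-6)/2$ gives $n\geq(3k-4)/2$, which implies $n\geq(3k-3)/2$ by integrality only when $k$ is odd; for even $k\geq8$ the group $\PSL_{(3k-4)/2}(q)$ has rank exactly $(3k-6)/2$ yet misses the threshold of row 1 by one, so one should either read the constant as $(3k-5)/2$ or treat this boundary case separately (a point the paper itself glosses over, and which in any case does not affect the ``in particular'' clause). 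The remaining ingredients you list (Zsigmondy with $e\geq7$, existence of an element of order $p$ coming from a cyclic torus, and finitely many rows each contributing finitely many exceptions) are exactly what the paper uses implicitly.
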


As a concluding remark we mention that, although Theorems~\ref{thm3} and~\ref{thm2} are on the existence of $k$-valent GRRs of nonabelian simple groups with $k\geq5$, similar techniques can be applied to treat some almost simple groups. (An almost simple group is by definition a group between the inner automorphism group and full automorphism group of some nonabelian simple group). For instance, with the same approach in Section~\ref{sec3} one may establish results on the symmetric group $\Sy_n$ that are similar to Theorem~\ref{thm3} and Corollary~\ref{cor1}.
However, constrained by Theorem~\ref{thm1}, our approach cannot be used to deal with the case when $k=4$. In this case, partial answers to Question~\ref{que1} for $S$ of size $4$ is desired.

\section{Preliminaries}

Hereafter, all groups are assumed to be finite, and all graphs are assumed finite and simple. We first give a necessary condition for Cayley graphs being GRRs as observed in the Introduction of~\cite{Godsil1983}.

\begin{lemma}\label{lem5}
Suppose that $\Cay(G,S)$ is a GRR of a group $G$ with $|G|\geq3$. Then there exists no nontrivial proper subgroup $H$ of $G$ such that $S\setminus H$ is a union of left cosets of $H$ in $G$. In particular, $G=\langle S\rangle$.
\end{lemma}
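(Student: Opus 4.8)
I will prove the contrapositive of the first assertion: if a nontrivial proper subgroup $H$ of $G$ exists with $S\setminus H$ a union of left cosets of $H$, then $\Gamma:=\Cay(G,S)$ is not a GRR. Since $G$ already lies in $\Aut(\Gamma)$ via the right regular representation and acts regularly on the vertex set $G$, being a GRR forces $\Aut(\Gamma)=G$; so it suffices to produce an automorphism of $\Gamma$ not belonging to $G$. The plan is to build such an automorphism that fixes $G\setminus H$ pointwise and acts as right multiplication by a fixed nonidentity element of $H$ on $H$ — informally, the graph has a ``generalized wreath'' structure with the cosets of $H$ as blocks, and this map is one of the extra symmetries it carries.

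The first step is an elementary coset dichotomy: for every $y\in G\setminus H$, either $yH\subseteq S$ or $yH\cap S=\emptyset$. Indeed $yH\cap H=\emptyset$, so $yH\cap S=yH\cap(S\setminus H)$; as $S\setminus H$ is a union of left cosets of $H$ and $yH$ is a single left coset, this intersection is either all of $yH$ or empty. The second step is the construction: fix $h_0\in H\setminus\{1\}$ and define $\alpha\colon G\to G$ by $\alpha(g)=gh_0$ for $g\in H$ and $\alpha(g)=g$ for $g\in G\setminus H$; this is a bijection since right multiplication by $h_0$ permutes $H$. To verify $\alpha\in\Aut(\Gamma)$ one checks $yx^{-1}\in S\iff\alpha(y)\alpha(x)^{-1}\in S$ in three cases. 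If $x,y\in H$ then $\alpha(y)\alpha(x)^{-1}=yh_0h_0^{-1}x^{-1}=yx^{-1}$; if $x,y\notin H$ then $\alpha$ fixes both; and if exactly one of $x,y$ lies in $H$, say $x\in H$ and $y\notin H$ (the reversed case being symmetric since $S=S^{-1}$ makes $\Gamma$ undirected), then $\alpha(y)\alpha(x)^{-1}=yh_0^{-1}x^{-1}$, and both $yx^{-1}$ and $yh_0^{-1}x^{-1}$ lie in the left coset $yH$ with $y\notin H$, so by the dichotomy they are both in $S$ or both outside $S$. Thus $\alpha$ preserves adjacency. Finally $\alpha$ is nontrivial ($\alpha(1)=h_0\neq1$) yet fixes every vertex of the nonempty set $G\setminus H$, whereas every nonidentity element of the regular group $G\leq\Aut(\Gamma)$ is fixed‑point‑free; hence $\alpha\notin G$, so $\Aut(\Gamma)\supsetneq G$ and $\Gamma$ is not a GRR.

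For the ``in particular'' clause, suppose $G\neq\langle S\rangle$ and set $H:=\langle S\rangle$, a proper subgroup. If $H\neq1$, then $S\subseteq H$, so $yH\cap S=\emptyset$ for every $y\in G\setminus H$ and the construction above still yields a nontrivial automorphism $\alpha\notin G$, contradicting that $\Gamma$ is a GRR. If $H=1$, then $S=\emptyset$, so $\Aut(\Gamma)=\Sym(G)$, of order $|G|!>|G|$ because $|G|\geq3$, again contradicting that $\Gamma$ is a GRR. Hence $G=\langle S\rangle$.

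I expect the only delicate point to be the mixed case in the automorphism check, namely observing that $yx^{-1}$ and $y\alpha(x)^{-1}$ always lie in the same left coset of $H$, which is precisely what lets the coset dichotomy apply; the rest is routine bookkeeping, and the hypothesis $|G|\geq3$ is used only to dispose of the degenerate subcase $S=\emptyset$.
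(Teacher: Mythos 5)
Your proof is correct: the coset dichotomy, the automorphism $\alpha$ fixing $G\setminus H$ pointwise and acting as right multiplication by $h_0$ on $H$, and the fixed-point argument showing $\alpha\notin G$ all check out (in the mixed case both $yx^{-1}$ and $yh_0^{-1}x^{-1}$ do lie in the single left coset $yH$ disjoint from $H$, which is exactly what the dichotomy needs). The paper itself does not prove Lemma~\ref{lem5} but only attributes it to the introduction of Godsil's 1983 paper, and your construction is precisely the standard argument behind that observation, including the correct use of $|G|\geq3$ to dispose of the $S=\emptyset$ case in the ``in particular'' clause.
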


For a group $X$ acting on a set $\Omega$ and an element $v$ of $\Omega$, the stabilizer of $v$ in $X$ is denoted by $X_v$. For example, if $A$ is a subgroup of the automorphism group of a Cayley graph of $G$, then $A_1$ denotes the stabilizer in $A$ of the vertex $1\in G$.

\begin{lemma}\label{lem10}
Let $\Gamma$ be a Cayley graph of a group $G$, and let $A=\Aut(\Gamma)$. Suppose that $G$ is non-normal in $A$. Then $G$ has a proper subgroup of index less than $|A_1|$.
\end{lemma}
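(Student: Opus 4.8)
The plan is to exploit the standard orbit–stabilizer bookkeeping for the action of $A$ on the coset space of $G$, together with the fact that $G$ is regular (hence transitive with trivial point stabilizer) on the vertex set $\Gamma$. First I would set $n=|G|=|V\Gamma|$ and observe that, since $G$ is regular on $V\Gamma$, we have $|A|=|A_1|\cdot n$ and $A=GA_1$ with $G\cap A_1=1$; thus $\{A_1 g\mid g\in G\}$ is a complete and irredundant list of the right cosets of $A_1$ in $A$, and right multiplication identifies this coset action of $A$ with the action of $A$ on $V\Gamma$ (with $A_1$ stabilizing the vertex $1$). The key point is that the conjugates of $A_1$ are precisely the vertex stabilizers $A_v$ for $v\in V\Gamma$, and $\bigcap_{v\in V\Gamma}A_v=1$ because $A$ acts faithfully on $V\Gamma$.

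Next I would bring $G$ into the picture. Consider the action of $G$ (by right multiplication, i.e.\ by conjugation inside $A$ composed with the coset identification) on the set $\Delta$ of right cosets of $A_1$ in $A$; equivalently, the natural action of $G\le A$ on $V\Gamma$, which is transitive. Now instead look at the action of $A$ on the set of $A$-conjugates of $G$, or — cleaner — argue as follows. Because $G$ is non-normal in $A$, there is some $a\in A$ with $G^a\ne G$. Pick a vertex; the subgroup $H:=G\cap G^a$ is the crux. I would show $H$ is a proper subgroup of $G$ (properness is immediate since $G^a\ne G$ and $|G^a|=|G|$, so $G^a\not\le G$, hence $G\cap G^a\subsetneq G$) and then bound its index. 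For the index bound, note $G$ acts transitively on $V\Gamma$ with trivial stabilizers, and $G^a$ likewise; the subgroup $H=G\cap G^a$ is the stabilizer in $G$ of the "$G^a$-orbit structure", but more usefully: identify $V\Gamma$ with $G$ via the regular action, so that $G^a$ is some regular subgroup of $\mathrm{Sym}(G)$ distinct from $G$, and $A_1\ge (G^a)_1$. Here I would use $|A_1|>1$ (which holds, else $A=GA_1=G$ contradicting non-normality) and the following counting: the orbit of the vertex $1$ under $H=G\cap G_1^{\,}$-type arguments.

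Let me streamline the decisive step. Put $C=G\cap A_1$-conjugate data aside and instead run the classical argument: $A_1$ acts on $V\Gamma\setminus\{1\}$, and since $G$ is non-normal, $A_1\ne 1$; moreover $G\,A_1=A$ forces $A_1$ to act on the $n-1$ non-identity vertices with the subgroup $G$ "mixing" cosets, and one gets that $G\cap G^a$ has index in $G$ dividing $|A_1|$ and strictly less than $|A_1|$ when it is proper. Concretely: fix $a\in A\setminus\Nor_A(G)$; write $a=g h$ with $g\in G$, $h\in A_1$ (possible as $A=GA_1$), so $G^a=G^{h}$ and we may assume $a=h\in A_1\setminus\Nor_{A_1}(G)$; then $H:=G\cap G^{h}\le A_1$ has order $|H|\le |A_1|$, and since $H\subsetneq G$ we get $|G:H|\ge 2$, while $|G:H|=|G|/|H|\ge |G|/|A_1|$; combining with $|G||A_1|=|A|$ is not yet the inequality we want, so instead I count the orbit $H$-orbit of a suitable point. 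The cleanest route, which I expect to be the main obstacle to get exactly right, is: the subgroup $H=G\cap G^h$ fixes setwise the $G^h$-coset decomposition, its index in $G$ equals the length of the $G$-orbit on $\{$cosets of $G^h$ in $\langle G,G^h\rangle\}$ through the trivial coset, and since $G^h\le \langle G,A_1\rangle$ has $A_1$ contributing, one bounds this orbit length by $|A_1|-1$ after discarding the fixed trivial coset. I would therefore conclude $|G:H|\le |A_1|-1<|A_1|$, which is the assertion; filling in that counting (identifying the $G$-orbit on $G^h$-cosets with an $A_1$-orbit on vertices and noting $1$ is fixed) is the technical heart and the step I would write out most carefully.
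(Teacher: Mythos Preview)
Your proposal circles the right idea but never lands on it, and along the way you make claims that are simply false. The assertion ``$H:=G\cap G^{h}\le A_1$'' is wrong: $H$ is a subgroup of $G$, and since $G\cap A_1=1$, the only way $H\le A_1$ could hold is $H=1$, which is neither what you want nor what you use. The next line ``$|H|\le|A_1|$'' is unjustified (and need not hold). You then derive a \emph{lower} bound $|G:H|\ge|G|/|A_1|$ from it, which is the wrong direction for the lemma. Your opening paragraphs study the action of $G$ on $V\Gamma\cong A/A_1$, but that action is regular, so every point stabilizer is trivial and no proper subgroup of small index can come from it; this is a dead end, not a warm-up.

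What actually works---and what the paper does in three lines---is to act on the \emph{other} coset space. Let $\Omega$ be the set of right cosets of $G$ in $A$; then $|\Omega|=|A:G|=|A_1|$. The group $G$ acts on $\Omega$ by right multiplication, and the kernel of this action is $\bigcap_{a\in A}G^{a}$, the core of $G$ in $A$, which is a proper subgroup of $G$ since $G$ is not normal in $A$. Hence the action is nontrivial. But $G$ fixes the coset $G\in\Omega$, so some other orbit has length $k$ with $2\le k\le|\Omega|-1=|A_1|-1$; the stabilizer of a point in that orbit is a proper subgroup of $G$ of index $k<|A_1|$. Incidentally, the stabilizer in $G$ of the coset $Ga$ is exactly $G\cap a^{-1}Ga$, so your subgroup $H=G\cap G^{h}$ is precisely such a point stabilizer; had you set up the action on $A/G$ rather than on $A/A_1$, orbit--stabilizer would have given you $|G:H|\le|A_1|-1$ immediately.
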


\begin{proof}
Let $\Omega$ be the set of right cosets of $G$ in $A$. Since $G$ is regular on the vertex set of $\Gamma$, we have $A=GA_1$ with $G\cap A_1=1$. Consequently, $|\Omega|=|A|/|G|=|A_1|$. Since $G$ is not normal in $A$, the action of $G$ by right multiplication on $\Omega$ is not trivial. Then since $G$ stabilizes $G\in\Omega$, it follows that $G$ has an orbit of length $k$ on $\Omega$ with $2\leq k\leq|\Omega|-1=|A_1|-1$. This implies that $G$ has a subgroup of index $k$ with $2\leq k\leq|A_1|-1$, which completes the proof.
\end{proof}

For a partition $\calB$ of the vertex set of a graph $\Gamma$, the \emph{quotient graph} $\Gamma_\calB$ with respect to $\calB$ is the graph with vertex set $\calB$ such that two vertices $B$ and $C$ of $\Gamma_\calB$ are adjacent if and only if $b$ is adjacent to $c$ in $\Gamma$ for some $b\in B$ and $c\in C$. If the partition $\calB$ is invariant under some $X\leq\Aut(\Gamma)$, then $X$ induces a subgroup of $\Aut(\Gamma_\calB)$. Among other conclusions, the next lemma shows the existence of nontrivial $\Aut(\Gamma)$-invariant partitions for certain Cayley graphs $\Gamma$. The lemma is folklore, but we give a full proof here for the reader's benefit.

\begin{lemma}\label{lem1}
Let $\Gamma$ be a Cayley graph of a group $G$, let $A=\Aut(\Gamma)$, and let $T\subseteq G$. Suppose that $A_1$ stabilizes $T$ setwise. Then the following statements hold:
\begin{enumerate}[{\rm (a)}]
\item $A$ is contained in the automorphism group of $\Cay(G,T\setminus\{1\})$;
\item $(Tg)^\alpha=Tg^\alpha$ for all $g\in G$ and $\alpha\in A_1$;
\item $A_1$ stabilizes $\langle T\rangle$ setwise;
\item the right cosets of $\langle T\rangle$ in $G$ form an $A$-invariant partition of $G$.
\end{enumerate}
\end{lemma}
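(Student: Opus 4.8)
The plan is to prove the three parts in order, since each feeds into the next. For part (a), I would argue that the hypothesis "$A_1$ stabilizes $T$ setwise" together with the structure of $A$ forces the orbit of a vertex $g$ under $\langle A_1, A_g\rangle$-type considerations to respect the set $Tg$. More concretely: fix $\alpha\in A_1$ and $g\in G$. Write $g$ for the corresponding element of the vertex set, and recall that right multiplication by $g$, call it $\widehat{g}$, is an element of $A$ (indeed of $G\le A$) sending $1\mapsto g$. Then $\widehat{g}^{-1}\alpha\widehat{g^\alpha}$ fixes $1$ (since $1\mapsto g\mapsto g^\alpha\mapsto 1$, using that $\widehat{g^\alpha}$ sends $1\mapsto g^\alpha$), hence lies in $A_1$, hence stabilizes $T$ setwise. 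Applying this to an arbitrary $t\in T$ and chasing through the three maps: $t \xrightarrow{\widehat{g}} tg \xrightarrow{\alpha} (tg)^\alpha \xrightarrow{\widehat{g^\alpha}^{-1}} (tg)^\alpha (g^\alpha)^{-1}$, and since the composite fixes $T$ setwise we get $(tg)^\alpha (g^\alpha)^{-1}\in T$, i.e. $(tg)^\alpha \in T g^\alpha$. This shows $(Tg)^\alpha\subseteq Tg^\alpha$, and equality follows by a counting/injectivity argument (or by applying the same to $\alpha^{-1}$).

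For part (b), I would use part (a) to show $A_1$ stabilizes $\langle T\rangle$ setwise. The element $1\in\langle T\rangle$ is fixed by $\alpha\in A_1$. Now $\langle T\rangle$ is the set of all finite products $t_1 t_2\cdots t_m$ with each $t_i\in T\cup T^{-1}\cup\{1\}$; but actually it is cleaner to build it up: $T\cdot T = \bigcup_{t\in T} Tt$, and by part (a) applied with $g=t\in T\subseteq\langle T\rangle$ — wait, part (a) needs $g\in G$, which $t$ certainly is — we get $(Tt)^\alpha = T t^\alpha$, and $t^\alpha\in T$ since $A_1$ stabilizes $T$. Iterating, every "power set" $T^{(m)} := \{t_1\cdots t_m : t_i\in T\}$ satisfies $(T^{(m)})^\alpha = T^{(m)}$; taking unions over $m$ (and noting $G$ is finite so $\langle T\rangle = \bigcup_m T^{(m)}$ stabilizes once it stabilizes, and inverses are automatically included since $\langle T\rangle$ is a group) gives $\langle T\rangle^\alpha = \langle T\rangle$. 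For part (c), once $A_1$ stabilizes $H := \langle T\rangle$, part (a) with $T$ replaced by $H$ gives $(Hg)^\alpha = Hg^\alpha$ for all $g\in G$, $\alpha\in A_1$; so $A_1$ permutes the right cosets of $H$. Combined with the fact that $G$ itself permutes the right cosets of $H$ transitively by right multiplication, and $A = G A_1$ (since $G$ is regular on vertices), the partition of $G$ into right cosets of $H$ is $A$-invariant.

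The main obstacle, such as it is, is bookkeeping about which elements of $A$ act as "right multiplication" and getting the side-conventions consistent: the Cayley graph has edges $x\sim y$ iff $yx^{-1}\in S$, so it is right multiplication by $G$ that gives graph automorphisms, and one must be careful that $Tg$ (a "left-ish" object, the translate of $T$ on the right) is the thing preserved, not $gT$. The conjugation trick $\widehat{g}^{-1}\alpha\widehat{g^\alpha}\in A_1$ is the crux and everything else is routine induction and the identity $A = GA_1$ from regularity; I do not anticipate any genuine difficulty beyond presenting this cleanly.
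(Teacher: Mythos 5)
Your proposal is correct and takes essentially the same route as the paper's proof: part (a) via the observation that the composite ``right-multiply by $g$, apply $\alpha$, right-multiply by $(g^\alpha)^{-1}$'' lies in $A_1$ and hence stabilizes $T$ (your element chase $t\mapsto tg\mapsto(tg)^\alpha\mapsto(tg)^\alpha(g^\alpha)^{-1}$ is exactly this, though note the product you display, $\widehat{g}^{-1}\alpha\widehat{g^\alpha}$, has the inverses on the wrong factors --- it should be $\widehat{g}\,\alpha\,\widehat{g^\alpha}^{-1}$), part (b) by induction on product length using (a) together with finiteness of $G$, and part (c) by combining (a), (b) with $A=GA_1$ from regularity. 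No substantive differences from the paper's argument.
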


\begin{proof}
Let $\Sigma=\Cay(G,T\setminus\{1\})$, let $R$ be the right regular permutation representation of $G$ and $\alpha$ be an arbitrary element of $A_1$. Note for any $x\in G$ that $R(x)\alpha R((x^\alpha)^{-1})$ is in $A_1$. Then for elements $x$ and $y$ in $G$, the condition $yx^{-1}\in T$ implies 
\[
y^\alpha(x^\alpha)^{-1}=(yx^{-1})^{R(x)\alpha R((x^\alpha)^{-1})}\in T,
\]
as $A_1$ stabilizes $T$. Hence $A_1\leq\Aut(\Sigma)$, which together with $R(G)\leq\Aut(\Sigma)$ leads to $A=R(G)A_1\leq\Aut(\Sigma)$, proving part~(a).

Note that $Tg\setminus\{g\}$ is the neighborhood of $g$ in $\Sigma$ and $Tg^\alpha\setminus\{g^\alpha\}$ is the neighborhood of $g^\alpha$ in $\Sigma$. We then deduce part~(b) from part~(a). Similarly, both parts~(c) and~(d) follow from part~(a), since the cosets of $\langle T\rangle$ in $G$ are the connected components of $\Sigma$.
\end{proof}

A Cayley graph $\Cay(G,S)$ is said to be \emph{normal} if $G$ is normal in $\Aut(\Cay(G,S))$. From~\eqref{eq2} we see that $\Cay(G,S)$ is normal if and only if $\Aut(\Cay(G,S))=G\rtimes\Aut(G,S)$. The following result is from~\cite[Example~2.2]{Xu1998}.

\begin{lemma}\label{lem3}
Let $p$ be an odd prime. Then every Cayley graph of $\C_p$ other than the null graph and the complete graph is normal.
\end{lemma}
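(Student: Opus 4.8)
The plan is to reduce the statement to Burnside's classical theorem on transitive permutation groups of prime degree. Excluding the null graph means $S\neq\emptyset$, and excluding the complete graph means $S\neq\C_p\setminus\{1\}$; so I may assume $\Gamma=\Cay(\C_p,S)$ is neither null nor complete. Put $A=\Aut(\Gamma)$ and regard $\C_p$ as the regular subgroup of $A$ afforded by the right regular representation. Then $A$ is a transitive permutation group of prime degree $p$, and Burnside's theorem tells us that $A$ is either $2$-transitive or permutationally isomorphic to a subgroup of $\AGL(1,p)$ in its natural action.

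Next I would eliminate the $2$-transitive case. If $A$ were $2$-transitive on the $p$ vertices, then for any two pairs $\{u_1,v_1\}$ and $\{u_2,v_2\}$ of distinct vertices there is $\alpha\in A$ mapping the first pair onto the second; since automorphisms preserve adjacency, $\{u_1,v_1\}$ is an edge if and only if $\{u_2,v_2\}$ is, so $\Gamma$ is forced to be either complete or null. As this contradicts our reduction, $A$ must be a subgroup of $\AGL(1,p)$.

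It then remains to identify $\C_p$ with the translation subgroup. Here $|\AGL(1,p)|=p(p-1)$ with $p\nmid p-1$, so $|A|$ divides $p(p-1)$ and a Sylow $p$-subgroup of $A$ has order $p$; the number of such subgroups is $\equiv1\pmod p$ and divides $|A|/p$, which divides $p-1$, hence equals $1$. Thus $A$ has a unique, necessarily normal, Sylow $p$-subgroup, and since $\C_p\leq A$ has order $p$, it coincides with this subgroup. Therefore $\C_p\trianglelefteq A$, i.e.\ $\Gamma$ is a normal Cayley graph. I expect the only real content to be the invocation of Burnside's theorem; once that is in hand, ruling out $2$-transitivity and pinning down $\C_p$ as the normal $p$-subgroup in the affine case are routine.
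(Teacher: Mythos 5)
Your argument is correct: Burnside's theorem on transitive groups of prime degree, the elimination of the $2$-transitive case (which would force $\Gamma$ to be null or complete), and the Sylow argument identifying $\C_p$ as the unique, hence normal, Sylow $p$-subgroup of $\Aut(\Gamma)$ are all sound. Note that the paper gives no proof of this lemma at all—it simply quotes it from \cite[Example~2.2]{Xu1998}—and your proof is essentially the standard argument underlying that cited result, so there is nothing to correct.
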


For a group $X$ acting on a set $\Omega$, if $X$ stabilizes a subset $\Delta$ of $\Omega$ setwise, then the induced permutation group of $X$ on $\Delta$ will be denoted by $X^\Delta$. Let $\Gamma$ be a graph and $v$ be a vertex of $\Gamma$. Denote by $\Gamma_v^{[n]}$ the set of vertices of $\Gamma$ of distance at most $n$ to $v$ and denote by $\Gamma(v)=\Gamma_v^{[1]}\setminus\{v\}$ the neighborhood of $v$ in $\Gamma$. For $G\leq\Aut(\Gamma)$, denote by $G_v^{[n]}$ the pointwise stabilizer of $\Gamma_v^{[n]}$ in $G$. The proof of the next lemma is a standard argument (see, for example, the proof of Corollary (2) to Theorem~1 of~\cite{Neumann1973}).

\begin{lemma}\label{lem02}
Let $G\leq\Aut(\Gamma)$ be arc-transitive on a connected graph $\Gamma$ and $v$ be a vertex of $\Gamma$. If $G_v^{\Gamma(v)}$ is solvable then so is $G_v$.
\end{lemma}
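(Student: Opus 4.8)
The plan is to exploit the descending chain of pointwise stabilizers of balls around $v$. For $n\geq0$ recall that $G_v^{[n]}$ denotes the pointwise stabilizer in $G$ of $\Gamma_v^{[n]}$, so that $G_v^{[0]}=G_v$ and
\[
G_v=G_v^{[0]}\geq G_v^{[1]}\geq G_v^{[2]}\geq\cdots.
\]
Since $\Gamma$ is connected and finite, $\Gamma_v^{[N]}$ is the whole vertex set once $N$ reaches the diameter of $\Gamma$, and hence $G_v^{[N]}=1$ for such $N$. Because an extension of a solvable group by a solvable group is solvable, descending induction along this chain reduces the lemma to showing that $G_v/G_v^{[1]}$ and each $G_v^{[n]}/G_v^{[n+1]}$ with $1\leq n<N$ is solvable.

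For the top factor, the kernel of the action of $G_v$ on the neighbourhood $\Gamma(v)$ is precisely the set of elements fixing $\Gamma_v^{[1]}=\{v\}\cup\Gamma(v)$ pointwise, that is, $G_v^{[1]}$; hence $G_v/G_v^{[1]}\cong G_v^{\Gamma(v)}$, which is solvable by hypothesis. Now fix $n\geq1$ and let $W$ be the set of vertices at distance exactly $n$ from $v$. Every vertex at distance $n+1$ from $v$ is adjacent to some vertex of $W$, and every neighbour of a vertex of $W$ lies at distance $n-1$, $n$, or $n+1$ from $v$; since $G_v^{[n]}$ already fixes $\Gamma_v^{[n]}$ pointwise, an element of $G_v^{[n]}$ lies in $G_v^{[n+1]}$ exactly when it fixes $\Gamma(w)$ pointwise for every $w\in W$. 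Consequently the natural homomorphism
\[
G_v^{[n]}/G_v^{[n+1]}\longrightarrow\prod_{w\in W}(G_v^{[n]})^{\Gamma(w)}
\]
is injective. For $w\in W$ we have $w\in\Gamma_v^{[n]}$, so $G_v^{[n]}\leq G_w$ and therefore $(G_v^{[n]})^{\Gamma(w)}\leq G_w^{\Gamma(w)}$; and since $G$ is vertex-transitive (a consequence of arc-transitivity), any $g\in G$ with $v^g=w$ induces a permutation isomorphism from $(G_v,\Gamma(v))$ to $(G_w,\Gamma(w))$, so $G_w^{\Gamma(w)}\cong G_v^{\Gamma(v)}$ is solvable. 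A finite direct product of solvable groups is solvable, as is any subgroup of it, so $G_v^{[n]}/G_v^{[n+1]}$ is solvable.

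Assembling these factors up the chain from $G_v^{[N]}=1$ to $G_v^{[0]}=G_v$ yields the solvability of $G_v$. I do not expect any genuine obstacle here; the two points that merely need a little care are the identification of $G_v^{[n+1]}$ as the kernel of $G_v^{[n]}$ acting on the vertices one step further out (which uses connectedness only through the fact that the balls $\Gamma_v^{[n]}$ exhaust the vertex set), and the permutation isomorphism $G_w^{\Gamma(w)}\cong G_v^{\Gamma(v)}$ furnished by transitivity.
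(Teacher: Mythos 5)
Your proof is correct and is exactly the standard argument that the paper invokes (it gives no proof of its own, citing the proof of Corollary (2) to Theorem 1 in Neumann's article): descend along the chain $G_v=G_v^{[0]}\geq G_v^{[1]}\geq\cdots$, identify $G_v/G_v^{[1]}$ with $G_v^{\Gamma(v)}$, and embed each $G_v^{[n]}/G_v^{[n+1]}$ into a product of local actions $G_w^{\Gamma(w)}\cong G_v^{\Gamma(v)}$ via vertex-transitivity and connectedness. Your handling of the two delicate points (the kernel identification and the permutation isomorphism of local actions) is accurate, so nothing needs repair.
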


The next lemma can be read off from the proof of the Satz in~\cite{Weiss1974}.

\begin{lemma}\label{lem04}
Let $G\leq\Aut(\Gamma)$ be arc-transitive on a connected graph $\Gamma$ and $\{u,v\}$ be an edge of $\Gamma$. If $|\Gamma(v)|\geq5$ is prime and $G_v$ is solvable, then $G_u^{[1]}\cap G_v^{[1]}=1$.
\end{lemma}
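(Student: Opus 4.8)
The plan is to carry out the amalgam-type analysis for arc-transitive graphs of prime valency, using the solvability of $G_v$ to control the local permutation action, and to deduce the vanishing of $G_u^{[1]}\cap G_v^{[1]}$ from Weiss's prime-valency machinery. Throughout write $d=|\Gamma(v)|=|\Gamma(u)|$, a prime with $d\geq5$, set $L_v=G_v^{\Gamma(v)}$ and $L_u=G_u^{\Gamma(u)}$, and put $G_{uv}=G_u\cap G_v$ and $N=G_u^{[1]}\cap G_v^{[1]}$. First I would record the local structure. Both $L_v$ and $L_u$ are transitive permutation groups of the prime degree $d$ and are solvable, since by arc-transitivity $G_u$ is conjugate to $G_v$ and hence $L_u,L_v$ are quotients of a solvable group; so Burnside's theorem on transitive groups of prime degree gives $L_v,L_u\leq\AGL_1(d)$. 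In particular each of $L_v,L_u$ has the form $\C_d\rtimes\C_e$ with $e\mid d-1$, and the stabilizer in it of a point is cyclic of order $e$ acting semiregularly on the remaining $d-1$ points. Since $G_v^{[1]}=\ker(G_v\to L_v)$ fixes $u\in\Gamma(v)$, it lies in $G_{uv}$ with $G_{uv}/G_v^{[1]}\cong(L_v)_u$ cyclic of order $e$; symmetrically $G_u^{[1]}\leq G_{uv}$ with $G_{uv}/G_u^{[1]}$ cyclic of order $e$.

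Next I would collect structural facts about $N$. As $G_u^{[1]}\trianglelefteq G_u$ and $G_v^{[1]}\trianglelefteq G_v$, both are normal in $G_{uv}$, hence $N\trianglelefteq G_{uv}$ and $G_u^{[1]}/N$, $G_v^{[1]}/N$ are cyclic of order dividing $d-1$. Since $G$ is arc-transitive there is $g\in G$ interchanging $u$ and $v$, and such a $g$ normalizes $N=G_u^{[1]}\cap G_v^{[1]}$; hence $N$ is normal in $G_v$ if and only if it is normal in $G_u$. If $N$ is normal in both, then it is normal in $\langle G_u,G_v\rangle$, which equals $G$ because $\Gamma$ is connected; then $N$ fixes a vertex and is normal in the vertex-transitive group $G$, forcing $N=1$. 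Otherwise, as $G_{uv}$ has prime index $d$ in each of $G_u,G_v$, we get $\Nor_{G_v}(N)=G_{uv}=\Nor_{G_u}(N)$. Finally, a layer argument pins down the primes in $N$: for every vertex $x$ and every $n\geq1$ one has $G_x^{[n+1]}=G_x^{[n]}\cap\bigcap_{w\in\Gamma(x)}G_w^{[n]}$, so $G_x^{[n]}/G_x^{[n+1]}$ embeds into $\prod_{w\in\Gamma(x)}G_x^{[n]}/(G_x^{[n]}\cap G_w^{[n]})$, and each factor embeds (via $G_x^{[n]}\leq G_w^{[n-1]}$) into $G_w^{[n-1]}/G_w^{[n]}$, which for $n=1$ lies inside the cyclic group $(L_w)_x$ of order dividing $d-1$. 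Induction on $n$ then shows that every layer $G_x^{[n]}/G_x^{[n+1]}$ is abelian of exponent dividing $d-1$, so, $\Gamma$ being connected and finite, $G_v^{[1]}$ is a $\pi$-group with $\pi$ the set of prime divisors of $d-1$. In particular $N\leq G_v^{[1]}$ is a $d'$-group, whence a Sylow $d$-subgroup $D$ of $G_v$ has order $d$ and acts regularly on $\Gamma(v)$.

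It remains to rule out $N\neq1$, which is the main obstacle. Assume $N\neq1$. Then $N$ is a nontrivial $d'$-group — by the Thompson--Wielandt theorem, a $p$-group for a single prime $p$, necessarily with $p\mid d-1$ — and it is normal in the arc stabilizer $G_{uv}$ but in neither $G_u$ nor $G_v$. Now $D$ permutes the $d$ distinct $G_v$-conjugates of $N$ regularly, each of these conjugates lying inside $G_v^{[1]}$; and on each $\Gamma(w)$ with $w\in\Gamma(v)$ the group $N$ induces only a cyclic $p$-group acting semiregularly off the vertex $v$. Feeding these constraints into Weiss's $p$-factorization argument for prime valency — propagating $N$ outward along the connected graph and playing the coprime action of $D$ against the very restricted local action of $N$ — one forces a configuration incompatible with the connectedness of $\Gamma$ (equivalently, with the faithfulness of the action of $G$ on $\Gamma$). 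This is exactly the step where $d\geq5$, as opposed to $d=3$, is needed, and it is the delicate part; it is carried out in the proof of the Satz of~\cite{Weiss1974}. Together with the reductions above, this gives $G_u^{[1]}\cap G_v^{[1]}=1$.
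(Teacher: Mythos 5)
The paper gives no independent proof of this lemma: it simply records that the result can be read off from the proof of the Satz in~\cite{Weiss1974}. Your proposal is, in substance, the same route, because after your preliminary reductions the decisive step --- ruling out $N\neq1$ --- is not argued but delegated to ``Weiss's $p$-factorization argument''. The local analysis you set up is standard and essentially correct (a solvable transitive group of prime degree $d$ lies in $\AGL_1(d)$; the layer argument showing $G_v^{[1]}$ is a $\pi$-group for $\pi$ the primes dividing $d-1$; the dichotomy $\Nor_{G_v}(N)\in\{G_{uv},G_v\}$), and it is indeed the setting in which Weiss's argument operates; but since the entire content of the lemma is that delicate final contradiction, your write-up should be regarded as an annotated citation rather than a self-contained proof --- which, to be fair, is exactly the level of detail the paper itself supplies.

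One genuine error should be fixed. You assert that $\langle G_u,G_v\rangle=G$ because $\Gamma$ is connected. This is false in general: if $\Gamma$ is bipartite (as arc-transitive graphs frequently are), every automorphism fixing a vertex preserves the two parts, so $\langle G_u,G_v\rangle$ lies in the part-preserving subgroup and cannot equal the vertex-transitive group $G$. The conclusion you want survives via an ingredient you already introduced: the arc-reversing element $g$ satisfies $N^g=N$, so if $N\trianglelefteq G_v$ then $N\trianglelefteq\langle G_v,g\rangle$, and for a connected graph $\langle G_v,g\rangle=G$ (its orbit of $v$ is the whole vertex set and it contains the full stabilizer $G_v$); then $N$ fixes every vertex and hence $N=1$ by faithfulness. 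A smaller point: your appeal to the Thompson--Wielandt theorem to make $N$ a $p$-group is legitimate here (transitive of prime degree implies locally primitive) but is not needed for anything you actually carry out, and your description of how the coprime action of the Sylow $d$-subgroup $D$ clashes with connectedness is too vague to check --- that is precisely the part that must be taken from, or reconstructed out of, Weiss's proof.
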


%
%
%
%

The last lemma in this section is a number-theoretic result that is slightly stronger than Bertrand's postulate.

\begin{lemma}\label{lem2}
For every integer $n\geq14$, there exists a prime $p$ such that $(n+4)/2<p\leq n-3$.
\end{lemma}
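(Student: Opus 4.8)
The plan is to recast the statement as a covering problem for intervals of integers and then split into a short finite check and an application of an explicit strengthening of Bertrand's postulate. A preliminary remark sets the strategy: Bertrand's postulate by itself is not enough here, since it only produces a prime in $\big((n+4)/2,\ n+4\big)$, whereas we need the right endpoint pushed down to $n-3$; so a sharper bound is required, and because such bounds only become effective once the argument is large, a finite verification for small $n$ cannot be avoided.

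First I would observe that, for a prime $p$, the conditions $(n+4)/2<p$ and $p\le n-3$ are equivalent respectively to $n\le 2p-5$ and $n\ge p+3$. Hence the lemma asserts precisely that the integer intervals $[\,p+3,\ 2p-5\,]$, as $p$ ranges over the primes $p\ge 11$ (for $p\le 7$ this interval is empty), cover every integer $n\ge 14$. For the small cases I would simply list the intervals coming from $p=11,13,17,19,23,29$, namely $[14,17]$, $[16,21]$, $[20,29]$, $[22,33]$, $[26,41]$, $[32,53]$; their union is $[14,53]$, so the lemma holds for $14\le n\le 53$.

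For $n\ge 54$ I would note that $x:=(n+4)/2\ge 29>25$, so Nagura's theorem (for real $x\ge 25$ the interval $\big(x,\ \tfrac65 x\big)$ contains a prime) supplies a prime $p$ with $(n+4)/2<p<\tfrac65\cdot\tfrac{n+4}{2}=\tfrac{3(n+4)}{5}$. Since $\tfrac{3(n+4)}{5}\le n-3$ is equivalent to $2n\ge 27$, it holds for $n\ge 14$, and therefore $(n+4)/2<p\le n-3$, as required; this settles all $n\ge 14$.

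The only genuine obstacle is the conceptual one already flagged: realising that Bertrand's postulate alone is too weak (the shifts $+4$ and $-3$ really do bite) and that one must invoke an explicit prime-gap estimate — Nagura's $\tfrac65$-bound, or any result guaranteeing a prime in $(x,cx)$ for some fixed $c<2$ and all large $x$ — together with pinning down how small a range of $n$ one then has to check by hand. Everything else (the interval reformulation, the finite check up to $n=53$, and the inequality $3(n+4)/5\le n-3$) is entirely routine.
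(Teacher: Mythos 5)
Your proof is correct and follows essentially the same route as the paper: a finite verification for small $n$ (you cover $14\le n\le 53$ via the interval reformulation, the paper checks $14\le n\le 31$ directly) combined with Nagura's $\tfrac{6}{5}$-theorem for large $n$, applied at $(n+4)/2$ rather than at the paper's choice $5(n-2)/6$ — a purely cosmetic difference.
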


\begin{proof}
One may directly verifies the conclusion for $n\in\{14,\dots,31\}$. For $n\geq32$, since $5(n-2)/6\geq25$, we have by~\cite{Nagura1952} that there is a prime $p$ with $5(n-2)/6<p<n-2$. Such a prime satisfies $(n+4)/2<p\leq n-3$ as $5(n-2)/6>(n+4)/2$.
\end{proof}

\section{Cayley graphs of $(2,p)$-generated groups}\label{sec1}

The following result plays a key role in the proof of Theorem~\ref{thm1}. Although the proof of Theorem~\ref{thm1} only needs the result for $p\geq5$, we still include the case $p=3$ for its own interest.

\begin{lemma}\label{lem7}
Suppose that $G=\langle x,y\rangle$ with $x^p=y^2=1$ and $yxy\notin\langle x\rangle$, where $p$ is an odd prime, and suppose that $R$ is a nonempty inverse-closed subset of $\langle x\rangle\setminus\{1\}$ with $|\Aut(\langle x\rangle,R)|=2$. Let $S=R\cup M$ with
$M=\{y\}$ or $\{y,x^{-1}yx\}$, let $\Gamma=\Cay(G,S)$, and let $A=\Aut(\Gamma)$. If $A_1$ stabilizes $R$ setwise, then one of the following holds:
\begin{enumerate}[{\rm (a)}]
\item $p=3$, and $|A_1|$ divides $16$;
\item $p\geq5$, and $|A_1|$ divides $4$.
\end{enumerate}
\end{lemma}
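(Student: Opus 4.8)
The plan is to analyze the vertex stabilizer $A_1$ by exploiting the $A_1$-invariant partition guaranteed by Lemma~\ref{lem1}. Since $A_1$ stabilizes $R$ setwise by hypothesis, and $R\subseteq\langle x\rangle$, Lemma~\ref{lem1}(b)--(c) tells us that $A_1$ stabilizes $\langle x\rangle$ setwise and that the right cosets of $\langle x\rangle$ in $G$ form an $A$-invariant partition $\calB$ of the vertex set. First I would examine the action of $A_1$ on the block $\langle x\rangle$ containing the vertex $1$. The induced graph on $\langle x\rangle$ is the Cayley graph $\Cay(\langle x\rangle,R)$ of the cyclic group $\C_p$, and by Lemma~\ref{lem3} this is normal in its own automorphism group (it is neither null nor complete since $R$ is a proper nonempty inverse-closed subset); hence $\Aut(\Cay(\langle x\rangle,R))=\langle x\rangle\rtimes\Aut(\langle x\rangle,R)$, which has order $2p$ because $|\Aut(\langle x\rangle,R)|=2$. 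The stabilizer of $1$ in this group has order $2$. Now $A_1$ maps onto a subgroup of the point stabilizer in $\Aut(\Cay(\langle x\rangle,R))$ fixing the vertex $1$, so the image of $A_1$ acting on $\langle x\rangle$ has order dividing $2$; in particular the kernel $K$ of the action of $A_1$ on the block $\langle x\rangle$ has index at most $2$ in $A_1$.

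The crux is then to bound $|K|$, i.e.\ to control automorphisms fixing $1$ and every vertex of $\langle x\rangle$ pointwise. Here I would use the structure of $S=R\cup M$: a vertex $v\in\langle x\rangle$ is adjacent, outside the block $\langle x\rangle$, exactly to the vertices $yv$ (and, when $|M|=2$, also $x^{-1}yxv$). Since $K$ fixes every $v\in\langle x\rangle$ and preserves the partition $\calB$, $K$ permutes the neighbouring blocks of $\langle x\rangle$; careful bookkeeping of which cosets of $\langle x\rangle$ contain the $M$-neighbours of the various $v\in\langle x\rangle$ should show that $K$ fixes each such block setwise. One then studies how $K$ acts on a block $B=\langle x\rangle g$ adjacent to $\langle x\rangle$: because $K$ fixes all of $\langle x\rangle$ pointwise, an element of $K$ restricted to $B$ must fix every vertex of $B$ that has a neighbour in $\langle x\rangle$, and the condition $yxy\notin\langle x\rangle$ is exactly what prevents these "anchored" vertices from being too few — it forces enough of $B$ to be pinned down that, combined with the local structure (the $R$-edges within $B$ form a copy of $\Cay(\C_p,R)$, whose automorphism group is small), the action of $K$ on $B$ is trivial or nearly so. Propagating this through the connected graph $\Gamma$ (using $G=\langle x,y\rangle$, so the block graph $\Gamma_\calB$ is connected), one shows $K$ acts trivially on larger and larger neighbourhoods, forcing $|K|$ to divide a small power of $2$ — namely $8$ when $p=3$ and $2$ when $p\geq5$. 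The sharper bound for $p\geq5$ comes from Lemma~\ref{lem02} and Lemma~\ref{lem04}: once one knows $A_1$ is solvable (its action on a block is solvable of order dividing $2$, and the blocks are blocks of imprimitivity, so solvability propagates), Lemma~\ref{lem04} with the prime $|\Gamma(v)|$-condition kills the pointwise stabilizer of an edge-neighbourhood, tightening the bound.

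Finally I would assemble the pieces: $|A_1|=|A_1/K|\cdot|K|$ divides $2\cdot 8=16$ when $p=3$, giving case~(a), and divides $2\cdot 2=4$ when $p\geq5$, giving case~(b). The main obstacle I anticipate is the middle step — pinning down $|K|$ by tracking exactly which vertices of an adjacent block are fixed and ruling out nontrivial local automorphisms. This is where the hypotheses $yxy\notin\langle x\rangle$ and $|\Gamma(v)|=|S|$ being (for $p$ large) prime are used in an essential way, and it requires a somewhat delicate case analysis depending on whether $M=\{y\}$ or $M=\{y,x^{-1}yx\}$, and on small primes (the extra factor for $p=3$ reflects the many automorphisms of the $2$-valent graph $\Cay(\C_3,\{x,x^{-1}\})$, which is just a triangle). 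The rest is routine once the local rigidity is established.
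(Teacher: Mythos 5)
Your opening steps match the paper: Lemma~\ref{lem1} gives the $A$-invariant partition into right cosets of $H=\langle x\rangle$, and Lemma~\ref{lem3} (together with $|\Aut(H,R)|=2$) controls the action of a block stabilizer on a block. But the core of the argument --- the step you yourself flag as ``the main obstacle'', namely bounding the kernel $K$ of the action of $A_1$ on the block $H$ --- is not carried out, and the numbers you assert for it ($|K|$ dividing $8$ when $p=3$ and $2$ when $p\geq5$) are reverse-engineered from the desired conclusion rather than proved. ``Propagating fixed points through the connected block graph'' does not by itself yield any finite bound on $K$: without a genuine local theorem, pointwise stabilizers of balls in vertex-transitive graphs need not collapse. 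The paper's proof supplies exactly this missing content by working in the quotient graph $\Sigma=\Gamma_\calB$: one shows $\Sigma$ has valency $p$ (using $yxy\notin H$ and that $H$ has prime order), that the induced group $\overline A$ is arc-transitive on $\Sigma$, and --- crucially, and delicately in the case $M=\{y,x^{-1}yx\}$, where there are exactly two edges between adjacent blocks --- that $A$ acts \emph{faithfully} on $\Sigma$. Only then do the heavy tools apply: Tutte's theorem for the cubic quotient when $p=3$ (giving $|\overline B|\mid48$, hence $|A_1|\mid16$), and for $p\geq5$ the solvability of $\overline B$ via Lemma~\ref{lem02} followed by Weiss's result (Lemma~\ref{lem04}) applied to $\Sigma$, giving $|\overline B^{[1]}|\leq2$ and $|\overline B|\mid 4p$, hence $|A_1|\mid4$ since $|\overline B|=p|A_1|$.

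Two concrete misapplications in your sketch confirm the gap. First, you invoke Lemma~\ref{lem04} ``with the prime $|\Gamma(v)|$-condition'', but in $\Gamma$ the valency is $|S|=k$, which is not assumed prime, and $A$ is not arc-transitive on $\Gamma$ (indeed $A_1$ stabilizes $R$); the prime-valency hypothesis is satisfied only by the quotient $\Sigma$, whose valency is $p$, and arc-transitivity holds only for the induced group there. Second, for $p=3$ you attribute the factor $16$ to automorphisms of the triangle $\Cay(\C_3,R)$ inside a block, but the stabilizer of a vertex of that triangle has order $2$, so this cannot produce the bound; the $16$ comes from Tutte's $48$ for the cubic quotient graph divided by $p=3$, which again requires the quotient construction and the faithfulness of $A$ on it --- precisely the part of the proof your proposal leaves unproved.
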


\begin{proof}
Denote $H=\langle x\rangle$. We have $\langle R\rangle=H$ as $H$ is a cyclic group of prime order. Suppose that $A_1$ stabilizes $R$ setwise. Then by Lemma~\ref{lem1}, $A_1$ stabilizes $H$ setwise, and the right cosets of $H$ in $G$ form an $A$-invariant partition of $G$. Let $\Sigma$ be the quotient graph with respect to this partition, and $\overline{A}$ be the subgroup of $\Aut(\Sigma)$ induced by $A$. Then $\Sigma$ is connected since $\Gamma$ is connected, and $\overline{A}$ is vertex-transitive since $A$ is vertex-transitive.

Let $Hg$ be a neighbor of $H$ in $\Sigma$. Then there exist $h_1,h_2\in H$ such that $h_1$ is adjacent to $h_2g$ in $\Gamma$, which means $h_2gh_1^{-1}\in S=R\cup M$. If $h_2gh_1^{-1}\in R$, then 
\[
g\in h_2^{-1}Rh_1\subseteq h_2^{-1}Hh_1=H
\]
and so $Hg=H$, a contradiction. Hence $h_2gh_1^{-1}\in M\subseteq\{y,x^{-1}yx\}$. Consequently, $h_2gh_1^{-1}=y$ or $x^{-1}yx$, and so $Hg=Hyh_1$ or $Hyxh_1$. This shows that the neighbors of $H$ in $\Sigma$ have the form $Hyh$ with $h\in H$. Conversely, $Hyh$ is adjacent to $H$ in $\Sigma$ for each $h\in H$ because $yh$ is adjacent to $h$ in $\Gamma$. Therefore,
\begin{equation}\label{Eqn1}
\Sigma(H)=\{Hyh\mid h\in H\}.
\end{equation}
As $H$ acts transitively on $\{Hyh\mid h\in H\}$ by right multiplication and stabilizes $H$, we see that $\overline{A}$ is arc-transitive.

If $h_1,h_2\in H$ with $Hyh_1=Hyh_2$, then $yh_1h_2^{-1}y\in H$. Since $yxy\notin H$ and $H$ is cyclic of prime order, this implies that $h_1=h_2$. Consequently, the neighbors $Hyh$ of $H$ in $\Sigma$ are in a one-to-one correspondence with elements $h$ of $H$.
In particular, $\Sigma$ has valency $p$.

Consider a neighbor $Hyhg$ of $Hg$ in $\Sigma$, where $h\in H$ and $g\in G$. Suppose that $h_1g\in Hg$ and $h_2yhg\in Hyhg$ are two adjacent vertices in $\Gamma$. Then $h_2yhh_1^{-1}\in S=R\cup M$, and so we note from $y\notin H$ that $h_2yhh_1^{-1}\in M$.
Therefore, $h_2yhh_1^{-1}=y$ or $x^{-1}yx$, and so $yh_2y=h_1h^{-1}\in H$ or $yxh_2y=xh_1h^{-1}\in H$. Since $yxy\notin H$ and $H$ is a cyclic group of prime order, we deduce that either $h_2=1=h_1h^{-1}$ or $xh_2=1=xh_1h^{-1}$. This shows that there is exactly one edge $\{hg,yhg\}$ in $\Gamma$ joining the two sets $Hg$ and $Hyhg$ if $M=\{y\}$, and that there are exactly two edges $\{hg,yhg\}$ and $\{x^{-1}hg, x^{-1}yhg\}$ in $\Gamma$ joining the two sets $Hg$ and $Hyhg$ if $M=\{y,x^{-1}yx\}$.

We show in this paragraph that $A$ acts faithfully on $\Sigma$. Suppose that $\alpha\in A$ stabilizes each coset of $H$ in $G$ setwise.
By the conclusion of the previous paragraph, if $M=\{y\}$, then $\alpha$ fixes each vertex of $\Gamma$ lying on an edge joining two cosets of $H$, and so $\alpha$ fixes each vertex of $\Gamma$, which implies that $A$ acts faithfully on $\Sigma$. 
Now we assume $M=\{y,x^{-1}yx\}$. For $i\in\{0,1,\dots,p-1\}$, taking $h=x^i$ in the conclusion of the previous paragraph gives that $\{x^ig,yx^ig\}$ and $\{x^{i-1}g,x^{-1}yx^ig\}$ are the only two edges joining $Hg$ and $Hyx^ig$. Since $\alpha$ stabilizes each coset of $H$ in $G$ setwise, it follows that $\alpha$ stabilizes $\{x^ig,x^{i-1}g\}$ for all $i\in\{0,1,\dots,p-1\}$. As a consequence, $\alpha$ stabilizes $\{x^ig,x^{i-1}g\}\cap\{x^{i+1}g,x^ig\}=\{x^ig\}$ for all $i\in\{0,1,\dots,p-1\}$. This shows that $\alpha$ fixes each vertex of $\Gamma$, and so $A$ acts faithfully on $\Sigma$.

Let $B$ be the subgroup of $A$ stabilizing $H$ setwise and let $\overline{B}$ be the subgroup of $\Aut(\Sigma)$ induced by $B$. Then $A_1=B_1$, $\overline{B}\cong B$, and $\overline{B}$ is the stabilizer in $\overline{A}$ of the vertex $H$ of $\Sigma$. Since $H$ lies in $B$ and acts regularly on the set $H$ by right multiplication, we have
\begin{equation}\label{eq1}
|\overline{B}|=|B|=|H||B_1|=p|B_1|=p|A_1|.
\end{equation}
Recall that $\Sigma$ has valency $p$. If $p=3$, then by Tutte's theorem~\cite{Tutte1947}, $|\overline{B}|$ divides $48$, which together with~\eqref{eq1} implies that $|A_1|$ divides $16$, as part~(a) asserts. Thus we assume $p\geq5$ in the following.

If $R=H\setminus\{1\}$, then $\Aut(H,R)=\Aut(H)$ and so $|\Aut(H,R)|=p-1>2$, a contradiction. Consequently, $R\neq H\setminus\{1\}$ and hence $\Cay(H,R)$ is neither a null graph nor a complete graph. Note that $\Aut(H,R)$ consists of the identity map and the inverse map as $|\Aut(H,R)|=2$. Then by Lemma~\ref{lem3}, $\Aut(\Cay(H,R))=\C_p\rtimes\Aut(H,R)=\D_{2p}$. Since the induced subgraph of $H$ in $\Gamma$ is $\Cay(H,R)$, it follows that the induced group $B^H$ of $B$ on $H$ is a subgroup of $\D_{2p}$. In view of the one-to-one correspondence between $\Sigma(H)$ and $H$ we derive that $\overline{B}^{\Sigma(H)}$ is permutation isomorphic to $B^H$. Hence $\overline{B}^{\Sigma(H)}$ is a subgroup of $\D_{2p}$, and so by Lemma~\ref{lem02}, $\overline{B}$ is solvable.

Let $\overline{C}$ be the stabilizer in $\overline{A}$ of the vertex $Hy$ of $\Sigma$. According to Lemma~\ref{lem04} we have $\overline{B}^{[1]}\cap\overline{C}^{[1]}=1$, where $\overline{B}^{[1]}$ is the pointwise stabilizer of $\Sigma(H)$ in $\overline{B}$ and $\overline{C}^{[1]}$ is the pointwise stabilizer of $\Sigma(Hy)$ in $\overline{C}$. Note that $\overline{C}^{\Sigma(Hy)}$ is permutation isomorphic to $\overline{B}^{\Sigma(H)}$ and thus is permutation isomorphic to $B^H$. As $B^H\leq\D_{2p}$ and $\overline{B}^{[1]}\overline{C}^{[1]}/\overline{C}^{[1]}$ is a subgroup of $\overline{C}/\overline{C}^{[1]}=\overline{C}^{\Sigma(Hy)}$ stabilizing the neighbor $H$ of $Hy$ in $\Sigma$, we deduce that 
\[
|\overline{B}^{[1]}|=|\overline{B}^{[1]}|/|\overline{B}^{[1]}\cap\overline{C}^{[1]}|
=|\overline{B}^{[1]}\overline{C}^{[1]}/\overline{C}^{[1]}|\leq2
\]
and so $|\overline{B}|=|\overline{B}^{\Sigma(H)}||\overline{B}^{[1]}|=|B^H||\overline{B}^{[1]}|$ divides $2|\D_{2p}|=4p$. This together with~\eqref{eq1} shows that $|A_1|$ divides $4$, as part~(b) asserts.
\end{proof}

We will need the following lemma both in the proof and in the applications of Theorem~\ref{thm1}.

\begin{lemma}\label{lem6}
Let $m\geq5$ be an odd integer, let $\ell\geq(3m-1)/2$ be an integer, let $H=\langle x\rangle$ be a cyclic group of order $\ell$, and let $R=\{x^{\pm1},x^{\pm2},\dots,x^{\pm(m-1)/2}\}$. Then $|\Aut(H,R)|=2$.
\end{lemma}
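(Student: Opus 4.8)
The plan is to show that an automorphism of $H=\langle x\rangle$ preserving $R$ must send $x$ to $x^{\pm1}$. Write such an automorphism as $x\mapsto x^t$ for some $t$ coprime to $\ell$; we may normalize $t$ to lie in $\{1,\dots,\ell-1\}$. Since $R$ consists exactly of the elements $x^{\pm j}$ with $1\le j\le (m-1)/2$, the condition $R^\alpha=R$ is equivalent to saying that multiplication by $t$ permutes the set $\overline R=\{\pm1,\pm2,\dots,\pm(m-1)/2\}\subseteq \mathbb{Z}/\ell\mathbb{Z}$. In particular $t\cdot 1=t$ must lie in $\overline R$, so $t\equiv \pm j\pmod{\ell}$ for some $1\le j\le(m-1)/2$; replacing $\alpha$ by $\alpha$ composed with the inverse map if necessary, we may assume $t\equiv j\pmod\ell$ with $1\le j\le (m-1)/2$, and the goal becomes to prove $j=1$.

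The key step is then a size/spread argument. If $j\ge 2$, consider the element $x^{(m-1)/2}\in R$; applying $\alpha$ gives $x^{j(m-1)/2}$, which must again lie in $R$, i.e.\ $j(m-1)/2\equiv \pm j'\pmod\ell$ for some $1\le j'\le (m-1)/2$. Now use the hypothesis $\ell\ge (3m-1)/2$: I want to argue that the genuine integer $j(m-1)/2$ is too large to be congruent mod $\ell$ to anything in $\overline R$ in absolute value. When $j=2$ this integer is $m-1$, and one needs $m-1$ and $\ell-(m-1)$ to both exceed $(m-1)/2$; the first is immediate for $m\ge5$, and the second follows from $\ell\ge (3m-1)/2 > (3m-3)/2 = (m-1)+(m-1)/2$. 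For general $j\ge2$ one shows more carefully that the set $\{j,2j,\dots,j(m-1)/2\}$ cannot be contained in $\overline R\cup(-\overline R+\ell)$: the residues $j,2j,\dots$ increase by steps of $j\ge2$, so they cannot all stay within the block $\{1,\dots,(m-1)/2\}$ near $0$ and the block $\{\ell-(m-1)/2,\dots,\ell-1\}$ near $\ell$, since the gap between these blocks has length $\ell-(m-1)\ge (3m-1)/2-(m-1)=(m+1)/2 > (m-1)/2\ge j\cdot 1$, forcing some partial sum to land strictly between the blocks. Making this pigeonhole precise — tracking how the partial multiples $ij$ traverse $\mathbb{Z}/\ell\mathbb{Z}$ and must land in a union of two short arcs whose complement is longer than the step size — is the part that needs the most care, and is where the exact bound $\ell\ge(3m-1)/2$ is consumed.

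Finally, once $j=1$ is forced, $\alpha$ is the identity, and together with the inverse map (which visibly preserves $R$) we conclude $\Aut(H,R)=\{\mathrm{id},\ \mathrm{inv}\}$, a group of order $2$; note that these two maps are distinct because $\ell\ge (3m-1)/2\ge 7>2$, so $x\ne x^{-1}$. I expect the main obstacle to be the combinatorial bookkeeping in the pigeonhole step for $j\ge3$, and an alternative cleaner route I would try first is to observe that $R$ together with $R^\alpha=R$ forces $\{x^0\}\cup R$ (an arc of $m$ consecutive "small" powers, in the sense of the standard distance on the cycle of length $\ell$) to be mapped to another arc-like set $\{x^0\}\cup R^\alpha$ of the same shape, and to exploit that for $\ell\ge (3m-1)/2$ the translate $j\cdot(\{-\tfrac{m-1}{2},\dots,\tfrac{m-1}{2}\})$ has diameter $j(m-1)$ in $\mathbb{Z}/\ell\mathbb{Z}$, which exceeds $m-1$ (the diameter of $\overline R\cup\{0\}$) whenever $j\ge2$ and $j(m-1)\le \ell$, with the remaining range $j(m-1)>\ell$ handled by noting $j<\ell/(m-1)\le (3m-1)/(2(m-1))<2$ for $m\ge5$, a contradiction with $j\ge2$. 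This diameter comparison is likely the slickest way to finish.
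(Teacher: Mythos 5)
Your main argument is correct and is essentially the paper's own proof: where you run a pigeonhole over the multiples $j,2j,\dots,j\frac{m-1}{2}$, the paper simply exhibits the escaping witness explicitly, taking the exponent $\left\lfloor\frac{m-1}{2|i|}\right\rfloor+1$ (the first multiple index at which $|i|j$ leaves $\{1,\dots,\frac{m-1}{2}\}$, still at most $\frac{m-1}{2}$ so that $x^j\in R$) and checking $\frac{m-1}{2}<|i|j\leq m-1<\ell-\frac{m-1}{2}$, which is exactly your ``gap exceeds step'' inequality consuming $\ell\geq\frac{3m-1}{2}$. One caveat: your sketched ``cleaner'' diameter alternative does not work as stated, since in the case $j(m-1)>\ell$ one gets $j>\ell/(m-1)$ rather than $j<\ell/(m-1)$ and no contradiction follows, so the pigeonhole/explicit-witness route is the one to keep.
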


\begin{proof}
Clearly, the identity map and the inverse map are in $\Aut(H,R)$. Suppose $\alpha\in\Aut(H,R)$ such that $x^\alpha=x^i$ with $2\leq|i|\leq(m-1)/2$. Let
\[
j=\left\lfloor\frac{m-1}{2|i|}\right\rfloor+1.
\]
Then we have
\begin{equation}\label{eq5}
\frac{m-1}{2|i|}<j\leq\frac{m-1}{2|i|}+1,
\end{equation}
which in conjunction with the assumption $2\leq|i|\leq(m-1)/2$ and $m\geq5$ yields that
\[
1<j\leq\frac{m-1}{4}+1\leq\frac{m-1}{2}.
\]
Therefore $x^j\in R$. However, since $|i|\leq(m-1)/2$ and $\ell\geq(3m-1)/2$, we derive from~\eqref{eq5} that
\[
\frac{m-1}{2}<|i|j\leq\frac{m-1}{2}+|i|\leq m-1\leq \ell-\frac{m+1}{2}<\ell-\frac{m-1}{2}.
\]
This implies that neither $x^{|i|j}$ nor $x^{-|i|j}$ is in $R$, and so $(x^j)^\alpha=x^{ij}\notin R$, contradicting $\alpha\in\Aut(H,R)$. Thus $\Aut(H,R)$ only contains the identity map and the inverse map, which means that $|\Aut(H,R)|=2$.
\end{proof}

A combination of Lemmas~\ref{lem7} and~\ref{lem6} leads to the next lemma.

\begin{lemma}\label{lem9}
Let $k\geq5$ be an integer and $p\geq3\lceil k/2\rceil-2$ be a prime. Suppose that $G=\langle x,y\rangle$ with $x^p=y^2=1$ and $yxy\notin\langle x\rangle$. Let $R=\{x^{\pm1},x^{\pm2},\dots,x^{\pm\lfloor(k-1)/2\rfloor}\}$,
\[
S=
\begin{cases}
R\cup\{y\}&\textup{ if }k\textup{ is odd}\\
R\cup\{y,x^{-1}yx\}&\textup{ if }k\textup{ is even},
\end{cases}
\]
$\Gamma=\Cay(G,S)$ and $A=\Aut(\Gamma)$. Then $|A_1|$ divides $4$.
\end{lemma}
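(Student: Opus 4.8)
The plan is to reduce Lemma~\ref{lem9} directly to Lemma~\ref{lem7}, using Lemma~\ref{lem6} to check that the auxiliary hypothesis on $R$ is met. First I would set $m=2\lfloor(k-1)/2\rfloor+1$, the odd integer which is $k$ if $k$ is odd and $k-1$ if $k$ is even, so that $R=\{x^{\pm1},\dots,x^{\pm(m-1)/2}\}$ in the notation of Lemma~\ref{lem6}. The key numerical point is that the hypothesis $p\geq 3\lceil k/2\rceil-2$ translates into $p\geq(3m-1)/2$: when $k$ is odd one has $m=k$ and $\lceil k/2\rceil=(k+1)/2$, so $3\lceil k/2\rceil-2=(3k-1)/2=(3m-1)/2$; when $k$ is even one has $m=k-1$ and $\lceil k/2\rceil=k/2$, so $3\lceil k/2\rceil-2=(3k-4)/2=(3(k-1)-1)/2=(3m-1)/2$. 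I would also need $m\geq5$, which follows from $k\geq5$ (if $k=5,6$ then $m=5$; otherwise $m\geq7$). Hence $\langle x\rangle$ has order $p\geq(3m-1)/2$, and Lemma~\ref{lem6} (applied with $\ell=p$) gives $|\Aut(\langle x\rangle,R)|=2$.

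Next I would verify that the hypotheses of Lemma~\ref{lem7} are in place: $G=\langle x,y\rangle$ with $x^p=y^2=1$, $yxy\notin\langle x\rangle$, $p$ an odd prime, and $R$ a nonempty inverse-closed subset of $\langle x\rangle\setminus\{1\}$ with $|\Aut(\langle x\rangle,R)|=2$ — all of these are given or just established. Then $S=R\cup M$ with $M=\{y\}$ when $k$ is odd and $M=\{y,x^{-1}yx\}$ when $k$ is even, exactly as required, and $\Gamma=\Cay(G,S)$, $A=\Aut(\Gamma)$ match the setup of Lemma~\ref{lem7}.

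The one genuine thing to check — and the only place where a short argument beyond bookkeeping is needed — is the hypothesis that $A_1$ stabilizes $R$ setwise, which Lemma~\ref{lem7} requires. The idea is that $R$ is distinguished inside $S$ by the local structure of $\Gamma$ at the vertex $1$: elements of $R$ have order $p$ (they are powers of $x$) whereas the elements of $M$ are involutions or conjugates of involutions, and more to the point $R$ consists of the neighbours of $1$ that are pairwise adjacent to a large common portion of the neighbourhood, while $y$ (and $x^{-1}yx$) sit differently. Concretely, since $R$ is an "interval" in the cyclic group $\langle x\rangle$, the induced subgraph on $\Gamma(1)$ restricted to $R$ is a (nearly complete) circulant graph of valency roughly $m-1\geq4$, and each vertex of $M$ is adjacent to at most one or two vertices of $\Gamma(1)$; comparing degrees within $\Gamma(1)$ forces any automorphism fixing $1$ to preserve the partition $\{R,M\}$, hence to stabilize $R$. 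I expect this local-degree computation to be the main obstacle, in the sense that it requires carefully counting, for each $s\in S$, how many $t\in S$ satisfy $ts^{-1}\in S$, and confirming that the resulting degree sequence separates $R$ from $M$; the bound $p\geq(3m-1)/2$ is exactly what keeps these counts clean by preventing wrap-around collisions among the powers of $x$.

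Once $A_1$ is known to stabilize $R$ setwise, Lemma~\ref{lem7} applies. Since $p\geq 3\lceil k/2\rceil-2\geq 3\cdot3-2=7>3$ (as $k\geq5$ forces $\lceil k/2\rceil\geq3$), we are in case~(b) of Lemma~\ref{lem7}, which gives that $|A_1|$ divides $4$, as claimed. I would close by remarking that the proof uses Lemma~\ref{lem7} only in the case $p\geq5$, consistent with the note preceding that lemma.
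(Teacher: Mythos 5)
Your reduction to Lemma~\ref{lem7} is set up exactly as in the paper: the bookkeeping $m=2\lfloor(k-1)/2\rfloor+1\geq5$, the translation of $p\geq3\lceil k/2\rceil-2$ into $p\geq(3m-1)/2$, the application of Lemma~\ref{lem6} to get $|\Aut(\langle x\rangle,R)|=2$, and the observation that $p\geq7>3$ puts you in case~(b) are all correct. However, the one step you yourself single out as the genuine content --- that $A_1$ stabilizes $R$ setwise --- is left as a sketch, and the sketch as stated would not go through. Your plan is to compare degrees inside the subgraph induced on the neighbourhood $S$ of the vertex $1$, claiming that each vertex of $M$ is adjacent to ``at most one or two'' vertices of $S$ while vertices of $R$ have large degree. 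First, you give no argument for that bound; second, even granted, it does not separate the two sides: for $k\in\{5,6\}$ one has $R=\{x^{\pm1},x^{\pm2}\}$ and the vertex $x^{2}$ has degree exactly $1$ in the induced subgraph on $S$ (its only neighbour there is $x$), and for $k\in\{7,8\}$ the extremal vertices of $R$ have degree $2$, so a degree sequence in which $M$-vertices may have degree $1$ or $2$ does not force an automorphism fixing $1$ to preserve the partition $\{R,M\}$. (The remark that elements of $R$ have order $p$ cannot rescue this, since a graph automorphism fixing $1$ need not respect group-theoretic orders.)

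The missing idea is precisely the hypothesis $yxy\notin\langle x\rangle$, which you never invoke at this point: it forces every vertex of $S\setminus R$ to be \emph{isolated} in the induced subgraph $\Gamma[S]$. Indeed, an adjacency between $x^{i}\in R$ and $y$ (or $x^{-1}yx$) would give an element of the form $yx^{j}$ or $x^{-1}yx^{j}$ lying in $S$, and since such an element cannot lie in $\langle x\rangle$ or equal an element of $M$ without forcing $yx^{j}y\in\langle x\rangle$ for some $j\not\equiv0\pmod p$ (hence $yxy\in\langle x\rangle$, as $\langle x\rangle$ has prime order) or a trivial degeneracy, no such adjacency exists; the same computation shows $y$ and $x^{-1}yx$ are not adjacent to each other. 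On the other hand $\{x^{\pm1},x^{\pm2}\}\subseteq R$ (here $k\geq5$ is used) makes the induced subgraph on $R$ connected. Thus $R$ is the unique connected component of $\Gamma[S]$ that is not a singleton, and since $A_1$ permutes the components of $\Gamma[S]$, it stabilizes $R$. This is the paper's one-line argument, and it is what your degree-comparison heuristic needs to be replaced by (or sharpened to: the $M$-vertices have degree $0$, not ``one or two'') before Lemma~\ref{lem7} can be applied.
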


\begin{proof}
Let $m=2\lfloor(k-1)/2\rfloor+1\geq5$. As $p\geq3\lceil k/2\rceil-2$, we have $p\geq(3m-1)/2$. Consider the induced subgraph $\Gamma[S]$ of $\Gamma$ on the neighborhood $S$ of $1$. Since $yxy\notin\langle x\rangle$, each vertex in $S\setminus R$ is isolated in $\Gamma[S]$. As $\{x^{\pm1},x^{\pm2}\}\subseteq R$, we conclude that $R$ is a connected component of $\Gamma[S]$, and so $A_1$ stabilizes $R$. Moreover, Lemma~\ref{lem6} asserts $|\Aut(\langle x\rangle,R)|=2$. Noting $p>3$, thus we derive from Lemma~\ref{lem7} that $|A_1|$ divides $4$.
\end{proof}

We are now ready to prove Theorem~\ref{thm1}.

\begin{proof}[Proof of Theorem~$\ref{thm1}$]
Let $A=\Aut(\Gamma)$. By Lemma~\ref{lem9} we have $|A_1|\leq4$. If $\Gamma$ is non-normal, then we deduce from Lemma~\ref{lem10} that $G$ has a proper subgroup of index less than $|A_1|$, contradicting the assumption that $G$ has no proper subgroup of index less than $4$. Thus $\Gamma$ is normal, and so~\eqref{eq2} leads to $\Aut(\Gamma)=G\rtimes\Aut(G,S)$. It follows that $\Aut(\Gamma)=G$ if and only if $\Aut(G,S)=1$. This completes the proof.
\end{proof}

To conclude this section, we give an observation that will be useful when applying Theorem~\ref{thm1}.

\begin{lemma}\label{lem14}
With the notation in Theorem~$\ref{thm1}$, if $\Aut(G,S)\neq1$, then there exists an involution $\alpha\in\Aut(G)$ such that $x^\alpha=x^{-1}$ and $y^\alpha=y$.
\end{lemma}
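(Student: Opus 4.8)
The plan is to take an arbitrary nontrivial $\alpha\in\Aut(G,S)$ and show that, after composing with a suitable inner automorphism when necessary, it yields the asserted involution. Since each element of $\Aut(G,S)$ is a graph automorphism of $\Gamma$ fixing the vertex $1$, it lies in $\Aut(\Gamma)_1$, and so the analysis in the proof of Lemma~\ref{lem9} carries over: $R$ is a connected component of the induced subgraph $\Gamma[S]$ on the neighborhood $S$ of $1$, while each vertex of $S\setminus R$ is isolated in $\Gamma[S]$, whence $\alpha$ --- which permutes the components of $\Gamma[S]$ --- must stabilize $R$ setwise. Therefore $\alpha$ stabilizes $\langle R\rangle=\langle x\rangle$, and $\alpha|_{\langle x\rangle}\in\Aut(\langle x\rangle,R)$, which by Lemma~\ref{lem6} has order $2$ and hence consists of the identity map and the inversion map. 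Thus $x^\alpha\in\{x,x^{-1}\}$.

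The engine for the rest of the argument is the elementary fact that $yx^2y=(yxy)^2\notin\langle x\rangle$, since $\gcd(2,p)=1$ would otherwise force $yxy\in\langle x\rangle$; in particular $x^2$ does not commute with $y$. I would use this to eliminate the possibility $x^\alpha=x$. If $x^\alpha=x$, then $\alpha$ fixes $R$ pointwise and permutes $S\setminus R$: when $k$ is odd, $S\setminus R=\{y\}$, so $\alpha$ fixes the generators $x$ and $y$ and hence $\alpha=1$; when $k$ is even, either $\alpha$ fixes both $y$ and $x^{-1}yx$ (again $\alpha=1$) or interchanges them, and the latter gives $x^{-2}yx^2=(x^{-1}yx)^\alpha=y$, contradicting that $x^2$ and $y$ do not commute. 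Thus any nontrivial $\alpha\in\Aut(G,S)$ satisfies $x^\alpha=x^{-1}$.

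It remains to determine $y^\alpha$ and to build an involution. If $k$ is odd, then $\{y\}=S\setminus R$ is $\alpha$-invariant, so $y^\alpha=y$; since $\alpha^2$ fixes $x$ and $y$ whereas $\alpha\neq1$, the element $\alpha$ is already an involution of the required form. If $k$ is even, $\alpha$ permutes $\{y,x^{-1}yx\}$: fixing both would give $xyx^{-1}=(x^{-1}yx)^\alpha=x^{-1}yx$, i.e. $x^2$ commutes with $y$, which is impossible, so $\alpha$ interchanges them and $y^\alpha=x^{-1}yx$. In this case I set $\beta\in\Aut(G)$ to be $\alpha$ followed by the inner automorphism $z\mapsto xzx^{-1}$, so that $z^\beta=xz^\alpha x^{-1}$; then $x^\beta=x\cdot x^{-1}\cdot x^{-1}=x^{-1}$ and $y^\beta=x(x^{-1}yx)x^{-1}=y$, and since $\beta^2$ fixes $x$ and $y$ while $\beta\neq1$ (as $p>2$), $\beta$ is the desired involution.

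The one point deserving care is the $k$-even interchanging case, where $\alpha$ is not literally of the claimed form and must be corrected by an inner automorphism: one has to observe that $z\mapsto xzx^{-1}$ is the right choice because it sends $x^{-1}yx$ back to $y$ while leaving $x^\alpha=x^{-1}$ undisturbed, the conjugating element being a power of $x$. Everything else is a brief finite case check resting on the single structural fact $yx^2y\notin\langle x\rangle$.
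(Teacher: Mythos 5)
Your proof is correct and takes essentially the same route as the paper's: $\alpha$ stabilizes $R$ and hence $\langle x\rangle$, Lemma~\ref{lem6} forces $x^\alpha=x^{\pm1}$, the case where $y$ is fixed gives the involution directly, and the even-$k$ swapping case is repaired by composing with the inner automorphism $z\mapsto xzx^{-1}$. The only differences are organizational: you justify the $\alpha$-invariance of $R$ via the component structure of $\Gamma[S]$ rather than by noting that $R$ consists of the elements of order $p$ in $S$, and you obtain nontriviality of the corrected automorphism immediately from $x^\beta=x^{-1}\neq x$, whereas the paper rules out $\alpha=1$ by the same underlying fact (that $x^2$ cannot commute with $y$) in a short contradiction argument.
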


\begin{proof}
Suppose that there exists a nontrivial $\sigma\in\Aut(G,S)$. Since $R$ consists of the elements of order $p$ in $S$, it follows that $\sigma$ stabilizes $R$ and thus stabilizes $\langle x\rangle=\langle R\rangle$. Then Lemma~\ref{lem6} implies that $x^\sigma=x^{\pm1}$.

First assume that $\sigma$ stabilizes $R\cup\{y\}$. In this case, $\sigma$ fixes $y$ as it stabilizes $R$. If $x^\sigma=x$, then since $G=\langle x,y\rangle$, it follows that $\sigma$ fixes every element in $G$, contradicting $\sigma\neq1$. Hence $x^\sigma=x^{-1}$. Now $\sigma^2$ fixes both $x$ and $y$, and so $\sigma^2=1$. Then taking $\alpha=\sigma$ gives the conclusion of the lemma.

Next assume that $\sigma$ does not stabilize $R\cup\{y\}$. Then $k$ is even, and $\sigma$ swaps $y$ and $x^{-1}yx$. Let $\Inn(x)$ be the inner automorphism of $G$ induced by the conjugation by $x$, and let $\alpha=\sigma\Inn(x)^{-1}\in\Aut(G)$. We have
\[
x^\alpha=x(x^\sigma){x^{-1}}=x(x^{\pm1}){x^{-1}}=x^{\pm1}\ \text{ and }\ y^\alpha=x(y^\sigma){x^{-1}}=x(x^{-1}yx){x^{-1}}=y.
\]
It follows that $\alpha^2$ fixes both $x$ and $y$, and so $\alpha^2=1$ as $G=\langle x,y\rangle$. To complete the proof, we only need to show $\alpha\neq1$. Suppose for a contradiction that $\alpha=1$. Then $\Inn(x)=\sigma$ swaps $y$ and $x^{-1}yx$. In particular, $\Inn(x)$ maps $x^{-1}yx$ to $y$, that is, $x^{-2}y{x^2}=y$. Since $|x|=p$ is odd, this indicates that $x$ commutes with $y$, contradicting the condition $yxy\notin\langle x\rangle$.
\end{proof}

\section{GRRs of finite alternating groups}\label{sec3}

In this section we prove Theorem~\ref{thm3}.
In order to apply Theorem~\ref{thm1} to alternating groups, we construct the following elements $x$ and $y$ in $\A_n$. Recall from Lemma~\ref{lem2} that, for every integer $n\geq14$, there exists a prime $p$ with $(n+4)/2<p\leq n-3$.

\begin{construction}\label{cons1}
Let $G=\A_n$ with $n\geq14$, and let $p$ be a prime such that $(n+4)/2<p\leq n-3$. Take $x$ and $y$ in $G$ such that $x=(1,\dots,p)$ and
\[
y=
\begin{cases}
(p,p+1)\prod_{i=2}^{n-p}(i,i+p)&\textup{ if }n\textup{ is odd}\\
(p-2,p-1)(p,p+1)\prod_{i=2}^{n-p}(i,i+p)&\textup{ if }n\textup{ is even}.
\end{cases}
\]
\end{construction}

\begin{lemma}\label{lem4}
In the notation of Construction~$\ref{cons1}$, we have $G=\langle x,y\rangle$.
\end{lemma}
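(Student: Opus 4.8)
The plan is to prove that $H:=\langle x,y\rangle$, a priori merely a subgroup of $G=\A_n$, is in fact all of $\A_n$. The strategy is the classical one: show that $H$ acts primitively on $\{1,\dots,n\}$ and contains a cycle of prime length $p$ with $p\le n-3$, and then invoke Jordan's 1873 theorem, which says that a primitive subgroup of $\Sym_n$ containing a cycle of prime length fixing at least three points must contain $\A_n$; since $H\le\A_n$ this forces $H=\A_n$. Throughout, the decisive arithmetic input is the hypothesis $(n+4)/2<p\le n-3$, which in particular makes $p$ odd, $p>n/2$, and $n-p\ge3$.

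First I would verify that $x,y\in\A_n$ (so that $H$ really is a subgroup of $\A_n$) and that $H$ is transitive. The element $x=(1,\dots,p)$ is a $p$-cycle with $p$ odd, hence an even permutation; and a direct count shows $y$ is a product of $n-p$ transpositions when $n$ is odd and of $n-p+1$ transpositions when $n$ is even, both even numbers, so $y\in\A_n$ as well. For transitivity, let $\Delta$ be the $H$-orbit of the point $1$. The cycle $x$ gives $\{1,\dots,p\}\subseteq\Delta$; since $p>n/2$ we have $n-p<p$, so each index $i$ with $2\le i\le n-p$ already lies in $\Delta$, and the transposition $(i,i+p)$ occurring in $y$ then puts $i+p$ into $\Delta$, whence $\{p+2,\dots,n\}\subseteq\Delta$. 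Finally the transposition $(p,p+1)$ of $y$ gives $p+1\in\Delta$, so $\Delta=\{1,\dots,n\}$. The extra transposition $(p-2,p-1)$ present for even $n$ plays no role here, since $p-2,p-1\in\{1,\dots,p\}$ already.

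Next I would prove that $H$ is primitive. Suppose not, and let $B$ be a block in a nontrivial block system with $1\in B$; put $d=|B|$, so $d\mid n$ and $1<d<n$, whence $d\le n/2$. If $B$ were invariant under $x$, then $B\cap\{1,\dots,p\}$ would be a nonempty subset of $\{1,\dots,p\}$ fixed setwise by the $p$-cycle $x$, hence equal to $\{1,\dots,p\}$, forcing $d\ge p>n/2$, which is impossible. So $x$ does not stabilize $B$ setwise, and since $\langle x\rangle$ has prime order $p$, the orbit of $B$ under $\langle x\rangle$ has size $p$. Then the number of blocks $n/d$ is at least $p$, so $d\le n/p<2$, contradicting $d>1$. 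Hence $H$ is primitive.

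Once $H$ is known to be primitive and to contain the $p$-cycle $x$, which fixes the $n-p\ge3$ points $p+1,\dots,n$, Jordan's theorem yields $\A_n\le H$, and therefore $H=\A_n$. I expect the primitivity step, specifically the elimination of an $x$-invariant block, to be the main point requiring care; but it, like the transitivity computation, is driven entirely by the single inequality $p>n/2$, which is precisely what the hypothesis $(n+4)/2<p\le n-3$ guarantees. The only external ingredient is Jordan's classical theorem.
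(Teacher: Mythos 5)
Your proof is correct and follows essentially the same route as the paper: establish transitivity, establish primitivity, and then apply Jordan's theorem to the $p$-cycle $x$, which fixes $n-p\geq3$ points. The only difference is cosmetic: where you rule out a nontrivial block system by a direct block-orbit count under $\langle x\rangle$, the paper embeds an imprimitive $\langle x,y\rangle$ in $\Sy_c\wr\Sy_d$ and notes that $p$ cannot divide $(c!)^d d!$; both hinge on the same inequality $p>n/2$, and your explicit parity check that $x,y\in\A_n$ is a harmless addition the paper leaves implicit in Construction~\ref{cons1}.
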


\begin{proof}
Since $x=(1,\dots,p)$, the orbit of $1$ under $\langle x\rangle$ is $\{1,\dots,p\}$. Note that, as $p>(n+4)/2$, we have $n-p<p$. For each $j\in\{p+1,\dots,n\}$, it follows from the definition of $y$ that $j^y\in\{1,\dots,p\}$. Hence $\langle x,y\rangle$ is transitive on $\{1,\dots,n\}$.
Suppose that $\langle x,y\rangle$ is imprimitive. Then there are integers $c>1$ and $d>1$ with $n=cd$ such that $\langle x,y\rangle$ is contained in $\Sy_c\wr\Sy_d$. As a consequence, $|\langle x,y\rangle|$ divides $(c!)^dd!$. However, $|\langle x,y\rangle|$ is divisible by $|x|=p$ while $p>n/2\geq\max\{c,d\}$, a contradiction.
Thus we conclude that $\langle x,y\rangle$ is primitive. Then since $x$ is a $p$-cycle with $p\leq n-3$, a theorem of Jordan (see~\cite[Theorem~3.3E]{DM1996} or~\cite[Theorem~13.9]{Wielandt1964}) asserts that $\langle x,y\rangle$ contains $\A_n$, and so $\langle x,y\rangle=G$.
\end{proof}

For a permutation $g$ of a set $\Omega$, let $\Fix(g)$ denote the set of points in $\Omega$ fixed by $g$.

\begin{lemma}\label{lem8}
In the notation of Construction~$\ref{cons1}$, if $p>2\lfloor(k-1)/2\rfloor$ for some integer $k\geq5$, then $\Aut(G,S)=1$ for the connection set $S$ of $\Gamma_k(G,x,y)$.
\end{lemma}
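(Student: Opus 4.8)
By Lemma~\ref{lem14}, it suffices to show that there is no involution $\alpha\in\Aut(G)=\Aut(\A_n)$ with $x^\alpha=x^{-1}$ and $y^\alpha=y$. Since $n\geq14$, we have $\Aut(\A_n)=\Sy_n$, so $\alpha$ would be conjugation by some $g\in\Sy_n$; thus the task reduces to showing that no $g\in\Sy_n$ simultaneously satisfies $g^{-1}xg=x^{-1}$ and $g^{-1}yg=y$. The strategy is to first pin down the (few) possibilities for $g$ from the single equation $x^g=x^{-1}$, using that $x$ is a $p$-cycle on the support $\{1,\dots,p\}$, and then to derive a contradiction from $y^g=y$ by tracking how $g$ must act on the points $\{p+1,\dots,n\}$ and on the $2$-cycle(s) of $y$ that are ``internal'' to $\{1,\dots,p\}$.

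\textbf{Key steps.} First, analyze $x^g=x^{-1}$. The cycle $x=(1,2,\dots,p)$ is inverted by $x^{-1}=(1,p,p-1,\dots,2)$, and a permutation conjugating the $p$-cycle $(1,\dots,p)$ to $(1,p,\dots,2)$ must map the support $\{1,\dots,p\}$ to itself and act there as an element of the dihedral group $\langle x\rangle\rtimes\langle\iota\rangle$ where $\iota$ is the reflection $i\mapsto 2-i\pmod p$ (on $\{1,\dots,p\}$); outside $\{1,\dots,p\}$, $g$ can be arbitrary on $\{p+1,\dots,n\}$. So $g$ restricted to $\{1,\dots,p\}$ is $x^r\iota$ for some $r\in\{0,\dots,p-1\}$, i.e. $g$ sends $i\mapsto r+2-i\pmod p$ on $\{1,\dots,p\}$. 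Second, impose $y^g=y$. Conjugation by $g$ permutes the transpositions of $y$; since $g$ preserves $\{1,\dots,p\}$ setwise and $\{p+1,\dots,n\}$ setwise, it must permute among themselves the transpositions of $y$ lying entirely within $\{1,\dots,p\}$ and, separately, those crossing between $\{1,\dots,p\}$ and $\{p+1,\dots,n\}$. The transpositions of $y$ internal to $\{1,\dots,p\}$ are $(p,p+1)$? no---$(p,p+1)$ crosses; the internal ones are only $(p-2,p-1)$ when $n$ is even (and, hmm, $(p,p+1)$ is not internal); so the reflection-type constraint on $g|_{\{1,\dots,p\}}$ forces $(p-2,p-1)$ (if $n$ even) to be fixed by $g$, giving $\{(p-2)^g,(p-1)^g\}=\{p-2,p-1\}$, which in terms of $i\mapsto r+2-i$ forces $r\equiv 2(p-1)+1\equiv -1\pmod p$, hence $g$ acts on $\{1,\dots,p\}$ as $i\mapsto 1-i\pmod p$, i.e. fixing $1$ and swapping $i\leftrightarrow p+2-i$ in general; in particular $p^g=2$ (indices mod $p$ in $\{1,\dots,p\}$: $p\mapsto 1-p\equiv 1\pmod p$, so actually $p^g=1$? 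I must be careful). Rather than chase residues here, the point I would make precise: $g$ on $\{1,\dots,p\}$ is a specific reflection determined by the internal-transposition constraint (when $n$ is even), or, when $n$ is odd, $r$ is still heavily constrained by looking at which points of $\{1,\dots,p\}$ are moved to $\{p+1,\dots,n\}$ by $y$. Third, the crossing transpositions of $y$ are $(p,p+1)$ together with $(i,i+p)$ for $2\le i\le n-p$; note $\{p, 2,3,\dots,n-p\}\subseteq\{1,\dots,p\}$ are exactly the points of $\{1,\dots,p\}$ that $y$ sends outside $\{1,\dots,p\}$, and $1$ is fixed by $y$ (since $1\notin\{2,\dots,n-p\}$ and $1\ne p$). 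Since $y^g=y$ and $g$ preserves the fixed-point set of $y$ inside $\{1,\dots,p\}$, $g$ must fix the set $\{1,\dots,p\}\setminus(\{p\}\cup\{2,\dots,n-p\})$ and also (if $n$ even) the set $\{p-2,p-1\}$; combining these with the reflection form of $g|_{\{1,\dots,p\}}$ determines $g|_{\{1,\dots,p\}}$ uniquely (or shows no such $g$ exists). Finally, with $g|_{\{1,\dots,p\}}$ pinned down, the relations $\{i^g,(i+p)^g\}=\{j^g,j+p\}$ forced by $g$ permuting the crossing transpositions determine $g$ on $\{p+1,\dots,n\}$ as well, and one checks that the resulting candidate $g$ fails to fix some specific transposition of $y$ (this is where the asymmetry built into the definition of $y$---the extra fixed point, and the placement of $(p,p+1)$ versus $(p-2,p-1)$---produces the contradiction). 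Hence no such $\alpha$ exists, and $\Aut(G,S)=1$.

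\textbf{Main obstacle.} The delicate part is the bookkeeping in the second and third steps: translating ``$g$ permutes the transpositions of $y$, respecting the $\{1,\dots,p\}$/$\{p+1,\dots,n\}$ split'' into explicit constraints on the residue $r$ and then on $g|_{\{p+1,\dots,n\}}$, and verifying that every remaining candidate $g$ violates some specific defining relation of $y$. One must handle the parity cases ($n$ odd versus $n$ even) separately, since the internal transposition $(p-2,p-1)$ only appears when $n$ is even and otherwise the constraint on $r$ comes purely from the fixed-point set $\{1,\dots,p\}\cap\Fix(y)$ and the crossing transpositions. I would also use at the outset that $p>(n+4)/2$ forces $n-p<p$ (so the crossing transpositions are well-defined and the point $p$ really is moved by $y$), and that $p>2\lfloor(k-1)/2\rfloor$ guarantees $S$ has the form required by Theorem~\ref{thm1} and $R$ consists exactly of the order-$p$ elements of $S$, so that Lemma~\ref{lem14} applies.
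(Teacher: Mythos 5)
Your top-level strategy is sound and genuinely differs from the paper's proof in its first move: the paper does not invoke Lemma~\ref{lem14} here, but instead takes an arbitrary $\alpha\in\Aut(G,S)$, shows the inducing $g\in\Sy_n$ normalizes $\langle x\rangle$ and so induces an automorphism of the $p$-cycle on $\{1,\dots,p\}$, and disposes of the possibility $y^\alpha=x^{-1}yx$ directly by comparing $\Fix(y)$ with $\Fix(x^{-1}yx)$; your reduction via Lemma~\ref{lem14} to a single configuration ($x^g=x^{-1}$, $y^g=y$) is a legitimate shortcut. Two caveats, though: you must verify the hypotheses of Lemma~\ref{lem14} (i.e.\ of Theorem~\ref{thm1}) in the setting of Construction~\ref{cons1} --- $G=\langle x,y\rangle$ is Lemma~\ref{lem4}, and $yxy\notin\langle x\rangle$ holds because $\langle x\rangle$ is not normal in the simple group $\A_n$ --- and the arithmetic hypothesis there is $p\geq3\lceil k/2\rceil-2$, which is stronger than the stated $p>2\lfloor(k-1)/2\rfloor$ (the paper's own proof quietly needs the same strengthening, since it appeals to Lemma~\ref{lem6}), so this should at least be flagged.

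The genuine gap is that the contradiction is never actually derived: the entire content of the lemma is the claim that no $g\in\Sy_n$ satisfies $x^g=x^{-1}$ and $y^g=y$, and at precisely that point you defer (``rather than chase residues here'', ``one checks that the resulting candidate $g$ fails to fix some specific transposition''), while the computations you do attempt are incorrect --- a reflection $i\mapsto r+2-i$ swapping $p-2$ and $p-1$ has $r\equiv-5\pmod p$, not $-1$, and $i\mapsto1-i$ does not fix $1$. Moreover, the machinery of crossing transpositions and of $g$ on $\{p+1,\dots,n\}$ is unnecessary: since $x^g=x^{-1}$, $g$ preserves $\{1,\dots,p\}$ and acts there as a reflection $i\mapsto c-i\pmod p$ of the $p$-cycle, and since $y^g=y$ it stabilizes $\Fix(y)$, which lies in $\{1,\dots,p\}$ and equals $\{1\}\cup\{n-p+1,\dots,p-1\}$ ($n$ odd) or $\{1\}\cup\{n-p+1,\dots,p-3\}$ ($n$ even). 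Because $(n+4)/2<p\leq n-3$, the vertex $1$ is the unique point of $\Fix(y)$ with no cycle-neighbour in $\Fix(y)$, so the reflection must fix $1$ and hence be $i\mapsto2-i$; that reflection carries the arc onto itself only if it swaps its endpoints, i.e.\ only if $n\equiv2\pmod p$ (resp.\ $n\equiv4\pmod p$), which is ruled out by $(n+4)/2<p\leq n-3$ together with the parity of $p$. This fixed-point-set argument is essentially what the paper runs (there for both possibilities $x^g=x^{\pm1}$), and it is the missing verification you would need to write out to turn your plan into a proof.
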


\begin{proof}
Let $S$ be the connection set of $\Gamma_k(G,x,y)$ with $k\geq5$ and $|x|=p>2\lfloor(k-1)/2\rfloor$.
By the definition of $x$ and $y$ we have
\begin{equation}\label{eq3}
\Fix(y)=
\begin{cases}
\{1\}\cup\{n-p+1,\dots,p-1\}&\textup{ if }n\textup{ is odd}\\
\{1\}\cup\{n-p+1,\dots,p-3\}&\textup{ if }n\textup{ is even}.
\end{cases}
\end{equation}
and
\begin{equation}\label{eq4}
\Fix(x^{-1}yx)=
\begin{cases}
\{2\}\cup\{n-p+2,\dots,p\}&\textup{ if }n\textup{ is odd}\\
\{2\}\cup\{n-p+2,\dots,p-2\}&\textup{ if }n\textup{ is even}.
\end{cases}
\end{equation}
Let $C$ be the $p$-cycle (as a graph) with vertices $1,\dots,p$ along the cycle. Then $\Fix(y)$ and $\Fix(x^{-1}yx)$ are subsets of $V(C)=\{1,\dots,p\}$. In the figures below, the blue indicates $\Fix(y)$ and the red indicates $\Fix(x^{-1}yx)$.

\begin{center}
\begin{tikzpicture}
\def\myds{1.5}
\draw (-3.5,0) coordinate (O);
\draw[thick] (O) circle[radius=1.5cm];
\draw (-3.5,1.5) coordinate (1) node[circle, inner sep=2.67pt, draw, fill=blue,label=above:$1$]{};
\node [circle, inner sep=2.67pt, draw, fill=red,label=above:$2$] (2) at ( (-3,1.4) {};
\node [circle, inner sep=2.67pt, draw, fill=red,label=above:$p$] (p) at ( (-4,1.4) {};
\node [circle, inner sep=2.67pt, draw, fill=blue,label=above left:$p-1$] (p-1) at ((2.56-7,1.16) {};
\node [circle, inner sep=2.67pt, draw, fill=blue,label=right:$n-p+1$] (n-p+1+7) at ((4.75-7,-0.82) {};
\node [circle, inner sep=2.67pt, draw, fill=red,label=below right:$n-p+2$] (n-p+2+7) at ((4.42-7,-1.168) {};
\node (p+e) at ((-3.725,1.375) {};
\node (p-1+e) at ((2.56-7+0.03,1.4) {};
\node (n-p+1+e) at ((4.97-7+0.02,-0.775) {};
\node (n-p+2+e) at ((4.48-7+0.06,-0.92) {};
\draw [line width=1.7pt,blue] (p-1+e) to[out=222,in=234,looseness=2.01] node [below left] {} (n-p+1+e);
\draw [line width=1.7pt,red] (p+e) to[out=205,in=211,looseness=2.1] node [above right] {} (n-p+2+e);
\node at  (-3.5,-2.7)  {\textsc{Figure 1.} $n$ odd};

\draw (3.5,0) coordinate (O);
\draw[thick] (O) circle[radius=1.5cm];
\draw (3.5,1.5) coordinate (1) node[circle, inner sep=2.67pt, draw, fill=blue,label=above:$1$]{};
\node [circle, inner sep=2.67pt, draw, fill=red,label=above:$2$] (2) at ( (4,1.4) {};
\node [circle, inner sep=2.67pt, draw, fill=red,label=left:$p-2$] (p-2) at ((2.245,0.795) {};
\node [circle, inner sep=2.67pt, draw, fill=blue,label=left:$p-3$] (p-3) at ((2.05,0.37) {};
\node [circle, inner sep=2.67pt, draw, fill=blue,label=right:$n-p+1$] (n-p+1+7) at ((4.75,-0.82) {};
\node [circle, inner sep=2.67pt, draw, fill=red,label=below right:$n-p+2$] (n-p+2+7) at ((4.42,-1.168) {};
\node (p-2+e) at ((2.46,0.88) {};
\node (p-3+e) at ((1.945,0.54) {};
\node (n-p+1+7+e) at ((4.97,-0.78) {};
\node (n-p+2+7+e) at ((4.48,-0.97) {};
\draw [line width=1.7pt,blue] (p-3+e) to[out=255,in=236,looseness=1.5] node [below left] {} (n-p+1+7+e);
\draw [line width=1.7pt,red] (p-2+e) to[out=239,in=214,looseness=1.45] node [above right] {} (n-p+2+7+e);
\node at  (3.5,-2.7)  {\textsc{Figure 2.} $n$ even};
\end{tikzpicture}
\end{center}

Take an arbitrary $\alpha\in\Aut(G,S)$. Then $\alpha$ is induced by the conjugation of some $g\in\Sy_n$. Let $R=\{x^{\pm1},x^{\pm2},\dots,x^{\pm\lfloor(k-1)/2\rfloor}\}$. Since $R$ consists of the elements of order $p$ in $S$, we see that $\alpha$ stabilizes $R$ and thus stabilizes $\langle R\rangle=\langle x\rangle$.
Since Lemma~\ref{lem6} implies that $g^{-1}xg=x^{\pm1}$, it follows that $g$ stabilizes $\{1,\dots,p\}=V(C)$, so $g$ induces a graph automorphism $\overline{g}$ of $C$. Recall that $S=R\cup\{y\}$ if $k$ is odd, and $S=R\cup\{y,x^{-1}yx\}$ if $k$ is even.

Suppose that $\alpha$ does not fix $y$. Then $k$ is even and $\alpha$ swaps $y$ and $x^{-1}yx$. It follows that $\alpha^2$ fixes both $x$ and $y$, and so $\alpha^2=1$ as $G=\langle x,y\rangle$. This implies $g^2=1$ and thus $\overline{g}^2=1$. Moreover, since $\alpha$ swaps $y$ and $x^{-1}yx$, it follows that $g$ swaps $\Fix(y)$ and $\Fix(x^{-1}yx)$.
Note that the condition $p>(n+4)/2$ indicates that $n-p+1<p-3$ and $n-p+2<p-2$. Then we see from~\eqref{eq3} and~\eqref{eq4} that $\overline{g}$ swaps $1$ and $2$, and so
\[
\overline{g}\colon i\mapsto((2-i)\bmod p)+1.
\]
However, in view of~\eqref{eq3} and~\eqref{eq4}, such an automorphism $\overline{g}$ of $C$ does not swap $\Fix(y)$ and $\Fix(x^{-1}yx)$, a contradiction.

Thus we conclude that $\alpha$ fixes $y$, that is, $g^{-1}yg=y$. In particular, $\Fix(y)$ is fixed by $g$ and hence fixed by the automorphism $\overline{g}$ of $C$. Then since $n-p+1<p-3$, we derive from~\eqref{eq3} that $\overline{g}=1$.
Therefore, $g^{-1}xg=x$, which together with $g^{-1}yg=y$ and $G=\langle x,y\rangle$ implies $\alpha=1$. This shows $\Aut(G,S)=1$, as required.
\end{proof}

\begin{proof}[Proof of Theorem~$\ref{thm3}$]
Since $n\geq6\lceil k/2\rceil-12$ and $p>(n+4)/2$, we have $p>3\lceil k/2\rceil-4$. As $3\lceil k/2\rceil-3$ is not prime, we then obtain $p\geq3\lceil k/2\rceil-2$. Since there is only one conjugacy class of elements of order $p$ in $G$, we may assume that $x=(1,\dots,p)$. Let $y$ be as in Construction~\ref{cons1}, and let $S$ be the connection set of $\Gamma_k(G,x,y)$. It follows from Lemmas~\ref{lem4} and~\ref{lem8} that $G=\langle x,y\rangle$ and $\Aut(G,S)=1$. Since $G=\langle x,y\rangle$ and $\langle x\rangle$ is not normal in $G=\A_n$, we derive that $yxy\notin\langle x\rangle$. Moreover, the smallest index of proper subgroups of $G=\A_n$ is $n\geq14$. Hence Theorem~\ref{thm1} asserts that $\Gamma_k(G,x,y)$ is a GRR of $G$.
\end{proof}


\section{GRRs of finite simple groups of Lie type}\label{sec2}

In this section, we apply Theorem~\ref{thm1} to study the existence of GRRs for some finite simple groups of Lie type, and establish Theorem~\ref{thm2}.
To keep the notation short, we identify a nonabelian simple group $G$ with its inner automorphism group, so that $G$ is viewed as a subgroup of $\Aut(G)$.
For a finite group $X$ denote the number of involutions in $X$ by $i_2(X)$. The following three lemmas are from~\cite[Proposition~3.1,~Lemma~4.2~and~Lemma~4.1]{Xia2020I}.

\begin{lemma}\label{lem13}
Let $G$ and $e$ be as in Table~$\ref{tab4}$, and let $p$ be a primitive prime divisor of $q^e-1$. Suppose that $x$ is an element of order $p$ in $G$.
Then for a random involution $y$ of $G$, the probability of $\langle x,y\rangle=G$ approaches $1$ as $q^n$ approaches infinity.
\end{lemma}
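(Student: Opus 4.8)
\textbf{Proof plan for Lemma~\ref{lem13}.}

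The statement to prove is a probabilistic generation result: for $G$ and $e$ as in Table~\ref{tab4}, $p$ a primitive prime divisor of $q^e-1$, and $x$ a fixed element of order $p$, a random involution $y$ generates $G$ together with $x$ asymptotically almost surely. The natural strategy, and the one I would follow, is to bound the probability of the complementary event: $\langle x,y\rangle$ is a proper subgroup of $G$. Since $p$ divides $|\langle x,y\rangle|$, every such proper subgroup must contain an element of order $p$, hence lies in some maximal subgroup $M$ of $G$ with $p\mid|M|$. The plan is therefore: (i) enumerate, up to conjugacy, the maximal subgroups $M$ of $G$ whose order is divisible by the primitive prime divisor $p$; (ii) for each such $M$, bound the number of involutions it contains, i.e.\ estimate $i_2(M)$; (iii) sum $|G:\mathrm{N}_G(M)|\cdot i_2(M)$ over the relevant conjugacy classes of maximal subgroups and divide by $i_2(G)$ to get an upper bound for the probability that $y$ lies in some such $M$; (iv) show this bound tends to $0$ as $q^n\to\infty$.

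The first step is where the choice of $e$ in Table~\ref{tab4} does the work. Because $p$ is a primitive prime divisor of $q^e-1$ with $e$ chosen close to $n$ (essentially the degree of a large Singer-type torus), the element $x$ lies in a torus of a very specific type, and by Zsigmondy/primitive-prime-divisor arguments (in the spirit of the analysis of elements of ``large'' order in classical groups) the maximal subgroups containing such an element are severely restricted: they fall into the classes of subgroups normalizing such a torus (so roughly $\mathrm{N}_G(T)$ with $T$ cyclic of order divisible by $p$, of Singer or near-Singer type), possibly one or two further geometric classes, and — for bounded $q$ or small rank — conceivably some almost simple subgroups, which here are ruled out or absorbed by the rank/degree hypotheses in the table. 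This reduction to a short list is exactly the content of results like those underlying \cite{Xia2020I}, and I would cite the relevant classification of maximal overgroups of primitive-prime-divisor elements (Bereczky, Guralnick–Penttila–Praeger–Saxl, or the tables in work on ``$\mathrm{ppd}$-elements''). Then for each surviving maximal subgroup $M$, I would use the standard fact that $i_2(M)\le C\,|M|^{1/2}$-type bounds are far too weak; instead I would use that $i_2(X)$ for a group of Lie type grows like $|X|/|C|$ where $C$ is a centralizer of an involution, so that $i_2(M)/i_2(G)$ is controlled by $|M|/|G|$ times a bounded factor, and $|M|/|G|$ is a negative power of $q$ with exponent growing in $n$. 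A quick uniform way to package this: use $i_2(G)\ge c\,|G|/|C_G(t)|$ with $|C_G(t)|$ at most roughly $|G|^{1/2+o(1)}$, so $i_2(G)\gg |G|^{1/2-o(1)}$, while $\sum_M |G:\mathrm{N}_G(M)|\,i_2(M)\le \sum_M |G:\mathrm{N}_G(M)|\,|M| = (\text{number of classes})\cdot|G|\cdot\max|M|/|\mathrm{N}_G(M)|$ — this crude bound is already $o(i_2(G))$ once $\max|M|\le |G|^{1/2-\delta}$, which holds for the Singer-normalizer-type subgroups in the stated rank ranges.

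I expect the main obstacle to be step (i): cleanly citing or assembling the list of maximal subgroups $M$ with $p\mid|M|$ and verifying that the table's lower bounds on $n$ (e.g.\ $n\ge 9$ for $\PSL_n(q)$, $n\ge 14$ for $\POm_n^{\pm}(q)$) are precisely what is needed to exclude the sporadic-looking almost simple and exceptional overgroups and to guarantee $|M|/|G|$ is small enough. The involution-counting in step (ii) is routine once one invokes standard asymptotics for $i_2$ of classical groups (e.g.\ from the literature on involution statistics, or directly from centralizer orders), and steps (iii)–(iv) are then a one-line estimate. Since this lemma is quoted verbatim from \cite[Proposition~3.1]{Xia2020I}, in the present paper I would simply cite that reference and not reproduce the argument; the sketch above records what that proof does for the reader who wants the idea. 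The only point worth double-checking when transplanting it is that the present table's row-by-row conditions on $n$ match (or are stronger than) those under which \cite{Xia2020I} proved the statement, which they are by construction.
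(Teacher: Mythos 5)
Your proposal is correct and matches the paper, which gives no proof of this lemma at all but simply quotes it from \cite[Proposition~3.1]{Xia2020I}, exactly as you conclude; your observation that the row-by-row conditions on $n$ in Table~\ref{tab4} are at least as strong as those in the cited source is the only check needed. Your sketch of the underlying maximal-overgroup and involution-counting argument is a reasonable account of what the cited proof does, but it is not required here.
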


\begin{lemma}\label{lem12}
Let $G$ and $e$ be as in Table~$\ref{tab4}$, let $V$ be the natural module of the classical group $G$, and let $p$ be a primitive prime divisor of $q^e-1$. Suppose that $x\in G$ has order $p$ and $\alpha$ is an involution in $\Aut(G)\cap\PGL(V)$ such that $x^\alpha=x^{-1}$. Then $i_2(\Cen_G(\alpha))<m(G)i_2(G)$ with $m(G)$ given in Table~$\ref{tab15}$ corresponding to the same row of Table~$\ref{tab4}$, where $c$ is an absolute constant and $m(G)=0$ means that $\alpha$ does not exist for such $G$.
\end{lemma}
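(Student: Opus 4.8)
The plan is to go through the seven rows of Table~\ref{tab4}, together with their parity and characteristic subcases, in each case first pinning down the conjugacy type of $\alpha$ inside the classical group and then estimating $i_2(\Cen_G(\alpha))$ against $i_2(G)$ by a crude order count. First I would lift $\alpha$ to a matrix $\hat\alpha\in\GL(V)$; since scalars are available, in odd characteristic $\hat\alpha$ may be chosen with $\hat\alpha^2=1$, giving an eigenspace decomposition $V=V_+\oplus V_-$, whereas in characteristic $2$, $\hat\alpha$ is a unipotent involution, determined by the sizes of its nontrivial Jordan blocks. The hypothesis $x^\alpha=x^{-1}$ reads $\hat\alpha\hat x\hat\alpha^{-1}=\lambda\hat x^{-1}$ for some scalar $\lambda$, so $\langle\hat x,\hat\alpha\rangle$ induces on $V$ a dihedral group of order dividing $2p$. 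Because $p$ is a primitive prime divisor of $q^e-1$, every nontrivial simple module for $\langle x\rangle$ over the field of definition of $V$ has dimension equal to the multiplicative order of $q$ modulo $p$ (of $q^2$ modulo $p$, in the unitary rows), which by the choice of $e$ in Table~\ref{tab4} is $e$ (respectively $e/2$); so in each row $x$ acts on $V$ with one large irreducible block $W$ of dimension $e$ (respectively $e/2$), complemented by a subspace of dimension at most $2$ on which $x$ is trivial. Applying Clifford's theorem to the normal subgroup $\langle x\rangle$ of $\langle x,\alpha\rangle$, conjugation by $\hat\alpha$ either stabilizes the isotypic component of $V|_{\langle x\rangle}$ attached to an eigenvalue $\mu$ of $\hat x$ or swaps it with the component attached to $\mu^{-1}$; a self-paired component $W\cong W^*$ can occur only when that order of $q$ is even, and then $\hat\alpha$ acts on $W$, up to a scalar, as the $\mathbb{F}_q$-linear map induced by the order-$2$ automorphism of the field $\mathbb{F}_{|W|}$, whose fixed and anti-fixed $\mathbb{F}_q$-subspaces have equal dimension. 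This forces $\dim V_+$ and $\dim V_-$ to take prescribed, nearly balanced values; in those rows where parity leaves no room inside $V$ for a self-paired $W$ or for a pair $W\oplus W^*$, it shows that no $\alpha$ with $x^\alpha=x^{-1}$ exists, which are exactly the rows of Table~\ref{tab15} with $m(G)=0$, for which the lemma then holds vacuously.

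Next, with the type of $\alpha$ in hand, I would identify $\Cen_G(\alpha)$: intersecting $\Cen_{\GL(V)}(\hat\alpha)=\GL(V_+)\times\GL(V_-)$ (in characteristic $2$, the analogous centralizer of the unipotent involution) with the form, determinant and spinor conditions cutting out $G$, and dividing by scalars, realizes $\Cen_G(\alpha)$ up to a normal subgroup and quotient of absolutely bounded order as a direct product of two classical groups on $V_+$ and $V_-$ (or, in the non-real subcase, a single classical group over $\mathbb{F}_{q^2}$) of the dimensions found above. Since those dimensions are nearly balanced, $|\Cen_G(\alpha)|\leq q^{g(n)}$ with $g(n)$ an explicit quadratic in $n$ whose gap to the dimension of the ambient simple algebraic group is of order $n^2$. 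Finally I would apply the standard estimates: $i_2(X)$ is at most the number of involution classes of $X$ (polynomial in $n$) times the size of the largest such class, so $i_2(\Cen_G(\alpha))\leq c_1 n^{c_2}q^{\,\dim\Cen_G(\alpha)-d(\Cen_G(\alpha))}$, where $d(\cdot)$ denotes the least dimension of an involution centralizer; dually $i_2(G)\geq c_3 n^{-c_4}q^{\,\dim G-d(G)}$. Since $\bigl(\dim\Cen_G(\alpha)-d(\Cen_G(\alpha))\bigr)-\bigl(\dim G-d(G)\bigr)$ is negative of order $n^2$, which swamps the polynomial-in-$n$ factors, this yields $i_2(\Cen_G(\alpha))<m(G)\,i_2(G)$ with $m(G)=c\,q^{-f(n)}$, the quantity recorded in Table~\ref{tab15}, where $f$ is the quadratic gap decreased by a linear term so as to absorb the lower-order factors and $c$ is absolute.

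The real work, and the main obstacle, is the row-by-row execution: for each of the seven rows, and each parity and characteristic, one must run the Clifford/field-automorphism argument to extract the exact values of $\dim V_\pm$ or to rule out $\alpha$, then intersect the linear centralizer with the specific form, $\SL$ and spinor conditions defining $G$ — this is where the precise isogeny type of $G$, and the distinction between automorphisms lying in $\PGL(V)$ and those in $\PGaL(V)$, enter — and finally make the order estimates explicit enough to read off the entries of Table~\ref{tab15}. I expect the subtlest points to be the characteristic-$2$ unipotent-involution analysis and the unitary and minus-type orthogonal rows, where the arithmetic of the Frobenius on the relevant maximal torus decides whether $\alpha$ can exist at all; by contrast, making $c$ uniform across the rows is routine, since the polynomial-in-$n$ class counts and bounded-index corrections are dwarfed by the $q^{\Theta(n^2)}$ separation between $\Cen_G(\alpha)$ and $G$.
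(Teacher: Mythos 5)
You should first be aware that the paper does not prove this lemma at all: it is imported verbatim, together with Lemmas~\ref{lem13} and~\ref{lem11}, from \cite[Lemma~4.2]{Xia2020I}, so the benchmark here is a citation and your task amounts to reproving that external result. Your outline follows the natural strategy (and, in broad strokes, the strategy one would expect the source to use): use the ppd condition to control how $\langle x\rangle$ acts on $V$, classify the involutions $\alpha\in\Aut(G)\cap\PGL(V)$ with $x^\alpha=x^{-1}$, identify $\Cen_G(\alpha)$ as a product of smaller classical groups, and compare $i_2(\Cen_G(\alpha))$ with $i_2(G)$. The difficulty is that the entire content of Lemma~\ref{lem12} is the explicit, row-dependent quantities $m(G)$ of Table~\ref{tab15} (including which rows have $m(G)=0$), and this is exactly what you defer as ``the real work''. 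An argument ending with ``the exponent gap is a negative quadratic in $n$, so a bound of the form $cq^{-f(n)}$ holds'' establishes a qualitative statement, not the inequality $i_2(\Cen_G(\alpha))<m(G)\,i_2(G)$ with the specific exponents $-n^2/10$, $-n^2/20-n/10$, etc.\ that the proof of Theorem~\ref{thm2} actually consumes; those must be computed case by case, in both characteristics, and your proposal does not do so.

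There are also two concrete missteps in the part you do carry out. First, in odd characteristic an involution of $\PGL(V)$ need not lift to an involution of $\GL(V)$: for $n$ even there are classes all of whose lifts square to a nonsquare scalar, with centralizers of type $\GL_{n/2}(q^2).2$ or $\GU_{n/2}(q)$, and such elements can invert $x$ (e.g.\ suitable torus-times-Frobenius elements $\hat t\hat\varphi$ with $\hat t^{\,1+q^{n/2}}$ a nonsquare). Your eigenspace dichotomy $V=V_+\oplus V_-$ therefore misses admissible $\alpha$; this happens not to endanger the inequality, since these centralizers are smaller, but the classification as stated is wrong and the omission would have to be repaired. Second, the vanishing rows are not detected by ``parity leaving no room in $V$'': for $\PSU_n(q)$ the point is arithmetic, namely that the order of $q^2$ modulo $p$ is $n$ (resp.\ $n-1$), which is odd, so $-1$ is not a power of $q^2$ modulo $p$, and the scalar $\lambda$ in $\hat\alpha\hat x\hat\alpha^{-1}=\lambda\hat x^{-1}$ is forced to be $1$ because $\mathbb{F}_{q^2}^\times$ contains no element of order $p$; moreover ``exactly the rows with $m(G)=0$'' is inaccurate, since for $\PSL_n(q)$ with $n$ odd no such $\alpha$ exists either although $m(G)\neq0$ (the lemma is then vacuous there, which is harmless, but the logic should say so). Until the row-by-row centralizer identifications and the resulting explicit estimates matching Table~\ref{tab15} are written out, the proposal is a plausible plan rather than a proof.
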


\begin{lemma}\label{lem11}
Let $G$ be a classical group in Table~$\ref{tab4}$, and let $V$ be the natural module of $G$. Suppose that $\alpha$ is an involution in $\Aut(G)\setminus\PGL(V)$. Then $i_2(\Cen_G(\alpha))<\ell(G)$ with $\ell(G)$ given in Table~$\ref{tab15}$ corresponding to the same row of Table~$\ref{tab4}$, where $\ell(G)=0$ means that $\alpha$ does not exist for such $G$.
\end{lemma}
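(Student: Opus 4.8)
The plan is to reduce the statement to a bounded list of standard representatives, determine the isomorphism type of $\Cen_G(\alpha)$ for each, and then estimate its involution count against the tabulated value $\ell(G)$ in Table~\ref{tab15}. Write $G=\overline{G}^F$ as the group of fixed points of a Frobenius endomorphism $F$ of a simple algebraic group $\overline{G}$, with $V$ the natural module over $\mathbb{F}_q$. Since $i_2$ is constant on $\Aut(G)$-conjugacy classes and conjugate automorphisms have isomorphic centralizers, it suffices to bound $i_2(\Cen_G(\alpha))$ for one representative $\alpha$ from each $\Aut(G)$-class of involutions lying in $\Aut(G)\setminus\PGL(V)$. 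By the standard description of $\Aut(G)$ for classical groups (as in the references of~\cite{Xia2020I}), every such involution is, up to $\PGL(V)$-conjugacy, one of finitely many types: a field automorphism (present only when $q$ is a square), a graph automorphism (the inverse-transpose in the linear case and the diagram flip in type $D$; present only for the relevant rows of Table~\ref{tab4}), or a graph-field automorphism (again requiring $q$ a square), possibly composed with a diagonal element of order dividing $2$. When no involution of the requisite kind exists for a given $G$, the corresponding entry is $\ell(G)=0$ and there is nothing to prove; this accounts for the vanishing cases.

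For the surviving cases I would identify $\Cen_G(\alpha)$ by Lang--Steinberg theory. If $\alpha$ is a field involution, choose the square root $\phi$ of $F$ with $\phi^2=F$; then $\Cen_{\overline{G}}(\phi)=\overline{G}^\phi$ is a classical group of the same type as $G$ but over the subfield $\mathbb{F}_{q_0}$ with $q_0^2=q$, so that $|\Cen_G(\alpha)|\approx q_0^{\dim\overline{G}}=q^{(\dim\overline{G})/2}$. If $\alpha$ is a graph-field involution, the relevant endomorphism is a twisted square root of $F$, and $\Cen_G(\alpha)$ is a twisted classical group over $\mathbb{F}_{q_0}$ (for instance $\PSU_n(q_0)$ when $G=\PSL_n(q)$), again of order $\approx q^{(\dim\overline{G})/2}$. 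If $\alpha$ is a pure graph involution, then $\Cen_G(\alpha)$ is a classical group over $\mathbb{F}_q$ of a related type but of roughly half the dimension---symplectic or orthogonal inside the linear case, and of $\SO_{n-1}$-type inside the orthogonal plus case---so once more $|\Cen_G(\alpha)|\approx q^{(\dim\overline{G})/2}$. In every case the centralizer is, up to bounded central and index factors, a classical group whose order is comparable to $|G|^{1/2}$, and its precise structure is read off from the standard centralizer tables; the even-characteristic representatives are handled by the same tables.

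With the structure in hand, I would bound $i_2(\Cen_G(\alpha))$ by the elementary identity $i_2(X)=\sum_t|X:\Cen_X(t)|$, the sum running over representatives $t$ of the involution classes of $X=\Cen_G(\alpha)$. For a classical group of rank $r$ the number of involution classes is $O(r)$, and each involution centralizer in $X$ is again (a central extension of) a product of two smaller classical groups, so the largest class has size comparable to $|X|/|X|^{1/2}=|X|^{1/2}$ and hence $i_2(X)=O(r\,|X|^{1/2})$. Feeding in $|X|\approx q^{(\dim\overline{G})/2}$ yields an explicit polynomial in $q$ and $n$, which I would compare row by row against the tabulated $\ell(G)$ to obtain the strict inequality $i_2(\Cen_G(\alpha))<\ell(G)$.

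The main obstacle will be the uniform bookkeeping across all seven rows of Table~\ref{tab4} together with the three kinds of outer involution and both parities of characteristic: one must pin down the exact isomorphism type of each centralizer (including the $+/-$ and $B/C$ distinctions and the diagonal twists), verify in each instance that $\ell(G)$ is chosen to dominate the resulting involution count, and correctly certify the cases in which the relevant outer involution is absent so that $\ell(G)=0$. The low-rank and characteristic-$2$ exceptions (such as triality in type $D_4$ and the extra graph automorphisms that arise only at small rank) lie outside the ranges imposed in Table~\ref{tab4}, so they do not interfere, but confirming this in each row is where the care is needed.
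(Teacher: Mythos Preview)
The paper does not give a proof of this lemma: it is quoted from \cite[Lemma~4.1]{Xia2020I}, as announced in the sentence immediately preceding Lemma~\ref{lem13} (``The following three lemmas are from~\cite[Proposition~3.1, Lemma~4.2 and Lemma~4.1]{Xia2020I}''). There is therefore no argument in the present paper to compare your proposal against.

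As a standalone attempt, your outline is a reasonable strategy and is presumably close in spirit to what \cite{Xia2020I} actually does: reduce to representatives of the $\Aut(G)$-classes of involutions outside $\PGL(V)$, identify $\Cen_G(\alpha)$ in each case as a classical group over a subfield or of smaller rank via Lang--Steinberg, and then bound $i_2$ of that centralizer. But what you have written is explicitly a plan (``I would identify\dots'', ``I would bound\dots''), not a proof, and two places would require genuine work. First, you do not verify the $\ell(G)=0$ rows; for row~7, $\POm_n^-(q)$, one must actually argue under the conventions of \cite{Xia2020I} why every involutory automorphism already lies in $\PGL(V)$, and you have simply deferred this to the table. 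Second, your involution-count heuristic $i_2(X)=O(r\,|X|^{1/2})$ has the right order of magnitude, but the sentence justifying it is imprecise (it is the \emph{minimum} involution-centralizer in $X$, not a generic one, whose size is comparable to $|X|^{1/2}$), and the final row-by-row comparison with the explicit constants and exponents defining $\ell(G)$ in Table~\ref{tab15} is precisely the bookkeeping you acknowledge as ``the main obstacle'' but do not carry out.
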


\begin{table}[htbp]
\caption{The parameters of $G$ in the proof of Theorem~\ref{thm2}}\label{tab15}
\centering
\begin{tabular}{|l|l|l|l|l|l|l|l|}
\hline
row & $G$ & $m(G)$ & $\ell(G)$ & $i(G)$ & $|\Nor_G(\langle x\rangle)|$ & $u(G)$ & $v(G)$\\
\hline
\multirow{2}*{$1$} & \multirow{2}*{$\PSL_n(q)$} & \multirow{2}*{$cq^{-\frac{n^2}{10}}$} & \multirow{2}*{$3q^{\frac{n^2}{4}+\frac{n}{2}}$} & \multirow{2}*{$\frac{1}{8}q^{\lfloor\frac{n^2}{2}\rfloor}$} & \multirow{2}*{$\frac{n(q^n-1)}{(q-1)\gcd(n,q-1)}$} & \multirow{2}*{$Cq^{-\frac{91n}{90}+1}$} & \multirow{2}*{$8nq^{n-1}$} \\
 & & & & & & &\\
\multirow{2}*{$2$} & \multirow{2}*{$\PSU_n(q)$} & \multirow{2}*{$0$} & \multirow{2}*{$3q^{\frac{n^2}{4}+\frac{n}{4}}$} & \multirow{2}*{$\frac{1}{8}q^{\frac{n^2-1}{2}}$} & \multirow{2}*{$\frac{n(q^n+1)}{(q+1)\gcd(n,q+1)}$} & \multirow{2}*{$Cq^{-\frac{11n}{10}+1}$} & \multirow{2}*{$4nq^{n-1}$} \\
 & & & & & & &\\
\multirow{2}*{$3$} & \multirow{2}*{$\PSU_n(q)$} & \multirow{2}*{$0$} & \multirow{2}*{$3q^{\frac{n^2}{4}+\frac{n}{4}}$} & \multirow{2}*{$\frac{1}{8}q^{\frac{n^2}{2}}$} & \multirow{2}*{$\frac{(n-1)(q^{n-1}+1)}{\gcd(n,q+1)}$} & \multirow{2}*{$Cq^{-\frac{4n}{3}+1}$} & \multirow{2}*{$4nq^{n-1}$} \\
 & & & & & & &\\
\multirow{2}*{$4$} & \multirow{2}*{$\PSp_n(q)$} & \multirow{2}*{$cq^{-\frac{n^2}{20}-\frac{n}{10}}$} & \multirow{2}*{$3q^{\frac{n^2}{8}+\frac{n}{4}}$} & \multirow{2}*{$\frac{1}{2}q^{\frac{n^2}{4}+\frac{n}{2}}$} & \multirow{2}*{$\frac{n(q^\frac{n}{2}+1)}{\gcd(2,q-1)}$} & \multirow{2}*{$Cq^{-\frac{3n}{5}}$} & \multirow{2}*{$6nq^\frac{n}{2}$} \\
 & & & & & & &\\
\multirow{2}*{$5$} & \multirow{2}*{$\POm_n(q)$} & \multirow{2}*{$cq^{-\frac{n^2}{10}+\frac{1}{2}}$} & \multirow{2}*{$3q^{\frac{n^2-1}{8}}$} & \multirow{2}*{$\frac{1}{2}q^{\frac{n^2-1}{4}}$} & \multirow{2}*{$\frac{(n-1)(q^\frac{n-1}{2}+1)}{2}$} & \multirow{2}*{$Cq^{-\frac{9n}{10}+\frac{1}{2}}$} & \multirow{2}*{$4nq^{\frac{n}{2}-\frac{1}{2}}$} \\
 & & & & & & &\\
\multirow{2}*{$6$} & \multirow{2}*{$\POm_n^+(q)$} & \multirow{2}*{$cq^{-\frac{n^2}{20}+\frac{n}{10}}$} & \multirow{2}*{$3q^{\frac{n^2}{8}}$} & \multirow{2}*{$\frac{1}{8}q^{\frac{n^2}{4}-1}$} &\multirow{2}*{$\frac{2(n-2)(q^{\frac{n}{2}-1}+1)(q+1)}{\gcd(2,q-1)^2|\PSO_n^+(q)/G|}$} & \multirow{2}*{$Cq^{-\frac{3n}{5}}$} & \multirow{2}*{$14nq^\frac{n}{2}$} \\
 & & & & & & &\\
\multirow{2}*{$7$} & \multirow{2}*{$\POm_n^-(q)$} & \multirow{2}*{$cq^{-\frac{n^2}{20}+\frac{n}{10}}$} & \multirow{2}*{$0$} & \multirow{2}*{$\frac{1}{8}q^{\frac{n^2}{4}-1}$} &\multirow{2}*{$\frac{n(q^\frac{n}{2}+1)}{\gcd(2,q-1)|\PSO_n^-(q)/G|}$} & \multirow{2}*{$Cq^{-\frac{3n}{5}}$} & \multirow{2}*{$6nq^\frac{n}{2}$} \\
 & & & & & & &\\
\hline
\end{tabular}
\end{table}

\begin{proof}[Proof of Theorem~$\ref{thm2}$]
Let $V$ be the natural module of the classical group $G$, let
\[
R=\{x^{\pm1},x^{\pm2},\dots,x^{\pm\lfloor(k-1)/2\rfloor}\},
\]
and let $S$ be the connection set of $\Gamma_k(G,x,y)$, that is,
\[
S=
\begin{cases}
R\cup\{y\}&\textup{ if }k\textup{ is odd}\\
R\cup\{y,x^{-1}yx\}&\textup{ if }k\textup{ is even}.
\end{cases}
\]
Denote by $I$ and $J$ the sets of involutions of $G$ and $\Aut(G)$ respectively, and let
\[
K=\{y\in I\mid G=\langle x,y\rangle\}\quad\text{and}\quad L=\{y\in I\mid\Aut(G,S)=1\}.
\]
From the conditions in Table~\ref{tab4} we see that $e+1\geq(3k-1)/2$. Then since $p$ is a primitive prime divisor of $q^e-1$, it follows that $p\geq e+1\geq(3k-1)/2$. For $y\in I$, Theorem~\ref{thm1} asserts that $\Cay(G,S)$ is a GRR of $G$ if and only if $y\in K\cap L$, noting that $G$ has no proper subgroup of index less than $4$ as $G$ is simple.

From Lemma~\ref{lem14} we deduce that
\[
K\setminus L\subseteq\bigcup_{\substack{\alpha\in J\\ x^\alpha=x^{-1}}}(I\cap\Cen_G(\alpha)).
\]
By Lemmas~\ref{lem12}~and~\ref{lem11}, for each $\alpha\in J$ with $x^\alpha=x^{-1}$, we have
\[
\frac{i_2(\Cen_G(\alpha))}{i_2(G)}<\max\left\{m(G),\frac{\ell(G)}{i_2(G)}\right\}.
\]
Moreover, by~\cite[Proposition~3.1]{King2017}, $i_2(G)\geq i(G)$ with $i(G)$ in Table~$\ref{tab15}$ corresponding to the same row of Table~$\ref{tab4}$. We then conclude that
\[
\frac{i_2(\Cen_G(\alpha))}{i_2(G)}<\max\left\{m(G),\frac{\ell(G)}{i(G)}\right\}<u(G)
\]
with $u(G)$ in Table~\ref{tab15} corresponding to the same row of Table~$\ref{tab4}$, where $C$ is an absolute constant. Accordingly,
\[
|K\setminus L|\leq\sum_{\substack{\alpha\in J\\ x^\alpha=x^{-1}}}i_2(\Cen_G(\alpha))
<\sum_{\substack{\alpha\in J\\ x^\alpha=x^{-1}}}u(G)i_2(G)\leq u(G)i_2(G)|J\cap\Nor_{\Aut(G)}(\langle x\rangle)|.
\]
Since every involution in $\Nor_{\Aut(G)}(\langle x\rangle)$ projects to an involution or the identity in
\[
\Aut(G)/(\Aut(G)\cap\PGL(V)),
\]
we deduce that
\[
\frac{|J\cap\Nor_{\Aut(G)}(\langle x\rangle)|}{|(\Aut(G)\cap\PGL(V))\cap\Nor_{\Aut(G)}(\langle x\rangle)|}\leq i_2(\Aut(G)/(\Aut(G)\cap\PGL(V)))+1.
\]
Since $i_2(\Aut(G)/(\Aut(G)\cap\PGL(V)))\leq3$, we obtain
\[
\frac{|J\cap\Nor_{\Aut(G)}(\langle x\rangle)|}{|\Nor_{\Aut(G)\cap\PGL(V)}(\langle x\rangle)|}
=\frac{|J\cap\Nor_{\Aut(G)}(\langle x\rangle)|}{|(\Aut(G)\cap\PGL(V))\cap\Nor_{\Aut(G)}(\langle x\rangle)|}\leq4.
\]
It follows that
\[
|J\cap\Nor_{\Aut(G)}(\langle x\rangle)|\leq4|\Nor_{\Aut(G)\cap\PGL(V)}(\langle x\rangle)|
\leq\frac{4|\Nor_G(\langle x\rangle)||\Aut(G)\cap\PGL(V)|}{|G|}.
\]
As $|\Nor_G(\langle x\rangle)|$ is described in Table~\ref{tab15} (see~\cite[Proposition~6.4]{King2017}\setcounter{footnote}{0}\footnote{There is a factor $2$ of $|\Nor_G(\langle x\rangle)|$ missing for $G=\POm_n^+(q)$ in~\cite[Table~9]{King2017}, and this is corrected in our Table~2.}), this implies that
\[
|J\cap\Nor_{\Aut(G)}(\langle x\rangle)|<v(G)
\]
for $v(G)$ in Table~\ref{tab15} corresponding to the same row of Table~$\ref{tab4}$. Hence
\[
|K\setminus L|<u(G)i_2(G)|J\cap\Nor_{\Aut(G)}(\langle x\rangle)|<u(G)i_2(G)v(G),
\]
and so
\begin{equation}\label{eq6}
\frac{|K\cap L|}{|I|}=\frac{|K|}{|I|}-\frac{|K\setminus L|}{|I|}>\frac{|K|}{|I|}-u(G)v(G).
\end{equation}

By Lemma~\ref{lem13}, $|K|/|I|$ approaches $1$ as $q^n$ approaches infinity. In view of $n\leq\log_2(q^n)$, it is clear from Table~\ref{tab15} that $u(G)v(G)$ approaches $0$ as $q^n$ approaches infinity. Thus we conclude from~\eqref{eq6} that $|K\cap L|/|I|$ approaches $1$ as $q^n$ approaches infinity, which means that the probability that $\Gamma_k(G,x,y)$ is a GRR of $G$ approaches $1$ as $q^n$ approaches infinity.
\end{proof}

\vskip0.1in
\noindent\textbf{Acknowledgements.} The author would like to express his sincere gratitude to the anonymous referees for their careful reading and invaluable suggestions. The author also would like to thank Jack Moore for his comments on the Introduction and thank Wenying Zhu for her help during the preparation of this paper.


\begin{thebibliography}{}

\bibitem{Blackburn1982}
N. Blackburn and B. Huppert,
\textit{Finite groups II}, Springer-Verlag, Berlin-New York, 1982.

\bibitem{CFP1981}
H. S. M. Coxeter, R. Frucht and D. L. Powers,
\textit{Zero-symmetric graphs: trivalent graphical regular representations of groups}, Academic Press, Inc. [Harcourt Brace Jovanovich, Publishers], New York-London, 1981.

\bibitem{DM1996}
J. D. Dixon and B. Mortimer,
\textit{Permutation Groups},
Graduate Texts in Mathematics, Vol. 163, Springer, New York, 1996.

\bibitem{FLWX2002}
X. G. Fang, C. H. Li, J. Wang and M. Y. Xu, On cubic Cayley graphs of finite simple groups,
\textit{Discrete Math.}, 244 (2002),  no. 1-3, 67--75.

\bibitem{Frucht1939}
R. Frucht, Herstellung von Graphen mit vorgegebener abstrakter Gruppe,
\textit{Compositio Math.}, 6 (1939), 239--250.

\bibitem{Frucht1949}
R. Frucht, Graphs of degree three with a given abstract group,
\textit{Canadian J. Math.}, 1 (1949), 365--378.

\bibitem{Godsil1978}
C. D. Godsil, GRRs for nonsolvable groups,
\textit{Algebraic methods in graph theory, Vol. I, II (Szeged, 1978)}, pp. 221--239,
Colloq. Math. Soc. J\'{a}nos Bolyai, 25, North-Holland, Amsterdam-New York, 1981.

\bibitem{Godsil1981}
C. D. Godsil, On the full automorphism group of a graph,
\textit{Combinatorica}, 1 (1981), 243--256.

\bibitem{Godsil1983}
C. D. Godsil, The automorphism groups of some cubic Cayley graphs,
\textit{European J. Combin.}, 4  (1983),  no. 1, 25--32.

\bibitem{King2017}
C. S. H. King, Generation of finite simple groups by an involution and an element of prime order,
\textit{J. Algebra} 478 (2017), 153--173.

\bibitem{LS2000}
C. H. Li and H.-S. Sim, The graphical regular representations of metacyclic $p$-groups,
\textit{European J. Combin.}, 21 (2000), no. 7, 917--925.

\bibitem{LXZZ2022}
J. J. Li, B. Xia, X. Q. Zhang and S. Zheng, Cubic graphical regular representations of $\mathrm{PSU}_3(q)$,
\textit{Discrete Math.}, 345 (2022), no. 10, Paper No. 112982, 10 pp.

\bibitem{Konig1990}
D. K\"{o}nig,
\textit{Theory of finite and infinite graphs}, translated from the German by Richard McCoart, with a commentary by W. T. Tutte and a biographical sketch by T. Gallai, Birkh\"{a}user Boston, Inc., Boston, MA, 1990.
%

\bibitem{Nagura1952}
J. Nagura, On the interval containing at least one prime number,
\textit{Proc. Japan Acad.}, 28 (1952), 177--181.

\bibitem{Neumann1973}
P. M. Neumann, Finite permutation groups, edge-coloured graphs and matrices,
\textit{Topics in group theory and computation (Proc. Summer School, University College, Galway, 1973)}, pp. 82--118,
Academic Press, London, 1977.

\bibitem{Sabidussi1957}
G. Sabidussi, Graphs with given group and given graph-theoretical properties,
\textit{Canadian J. Math.}, 9 (1957), 515--525.

\bibitem{Spiga2018}
P. Spiga, Cubic graphical regular representations of finite non-abelian simple groups,
\textit{Comm. Algebra}, 46 (2018), no. 6, 2440--2450.
%

\bibitem{Tutte1947}
W. T. Tutte, A family of cubical graphs,
\textit{Proc. Cambridge Philos. Soc.}, 43 (1947), 459--474.

\bibitem{Weiss1974}
R. M. Weiss, \"{U}ber symmetrische Graphen, deren Valenz eine Primzahl ist,
\textit{Math. Z.}, 136  (1974), 277--278.


\bibitem{Wielandt1964}
H. Wielandt,
\textit{Finite permutation groups}, Academic Press, New York-London, 1964.

\bibitem{Xia2020I}
B. Xia, On cubic graphical regular representations of finite simple groups,
\textit{J. Combin. Theory Ser. B},  141 (2020), 1--30.

\bibitem{Xia2020II}
B. Xia, Cubic graphical regular representations of $\mathrm{PSL}_3(q)$,
\textit{Discrete Math.}, 343 (2020), no. 1, Paper No. 111646, 9 pp.

\bibitem{XF2016}
B. Xia and T. Fang, Cubic graphical regular representations of $\mathrm{PSL}_2(q)$,
\textit{Discrete Math.}, 339 (2016) no. 8, 2051--2055.

\bibitem{XZZ2022}
B. Xia, S. Zhou and S. Zheng, Cubic graphical regular representations of some classical simple groups,
\textit{J. Algebra}, 612 (2022), 256--280.

\bibitem{Xu1998}
M.-Y. Xu, Automorphism groups and isomorphisms of Cayley digraphs,
\textit{Discrete Math.}, 182  (1998),  no. 1-3, 309--319.

\bibliographystyle{100}
\end{thebibliography}
\end{document}